\documentclass[12pt,reqno]{amsart}

\usepackage{aliascnt,amsmath,amssymb,amsthm,amsfonts,mathtools,xcolor,shuffle}
\usepackage{CJKutf8}
\usepackage[marginparwidth=0pt,margin=20truemm]{geometry}

\definecolor{mylinkcolor}{RGB}{16,156,81}
\definecolor{mycitecolor}{RGB}{20,80,140}

\usepackage{hyperref}
\usepackage[nameinlink]{cleveref}
\hypersetup{
    bookmarksnumbered=true,
    bookmarksopen=true,
    colorlinks=true,
    linkcolor=mylinkcolor,
    citecolor=mycitecolor,
    urlcolor=mycitecolor,
    pdftitle={Shifted log-sine integrals and multiple zeta values},
    pdfauthor={Henrik Bachmann},
    pdfkeywords={multiple zeta values, shifted log-sine integrals},
}

\numberwithin{equation}{section}
\allowdisplaybreaks[2]

\theoremstyle{plain}
\newtheorem{thm}{Theorem}[section]
\newaliascnt{lem}{thm}
\newtheorem{lem}[lem]{Lemma}
\aliascntresetthe{lem}
\crefname{lem}{Lemma}{Lemmas}
\newaliascnt{prop}{thm}
\newtheorem{prop}[prop]{Proposition}
\aliascntresetthe{prop}
\crefname{prop}{Proposition}{Propositions}

\newtheorem{mainthm}{Main Theorem}

\crefname{mainthm}{Main Theorem}{Main Theorems}

\theoremstyle{definition}
\newaliascnt{rem}{thm}
\newtheorem{rem}[rem]{Remark}
\aliascntresetthe{rem}
\crefname{rem}{Remark}{Remarks}

\newcommand{\QQ}{\mathbb{Q}}
\newcommand{\cZ}{\mathcal{Z}}
\newcommand{\cS}{\mathcal{S}}
\newcommand{\cH}{\mathcal{H}}
\newcommand{\cA}{\mathcal{A}}
\newcommand{\fH}{\mathfrak{H}}
\newcommand{\cHall}{\cH^{\operatorname{MT}(\mathbb{Z}[\xi][1/6])}}
\newcommand{\sh}{\mathbin{\shuffle}}
\DeclareMathOperator{\SLs}{S}
\newcommand{\fM}{\mathfrak{M}}
\newcommand{\fN}{\mathfrak{N}}
\DeclareMathOperator{\LT}{LT}

\title{Shifted log-sine integrals and multiple zeta values}

\author{Henrik Bachmann}
\address{Graduate School of Mathematics, Nagoya University, Nagoya, Japan}
\email{henrik.bachmann@math.nagoya-u.ac.jp}

\date{\today}

\subjclass[2020]{Primary 11M32, Secondary 33B30}
\keywords{multiple zeta values, shifted log-sine integrals, Zinbiel algebras}

\begin{document}

\begin{abstract}
We show that the shifted log-sine integrals with odd entries, introduced
by Umezawa, are multiple zeta values.  In particular, they define an
algebra homomorphism from the $f$-alphabet algebra to the
algebra of multiple zeta values.  The proof uses a motivic realization at
sixth roots of unity and the Galois descent from level six to level one
studied by Glanois.  Finally, we give an interpretation of our results
using Zinbiel algebras and introduce iterated cotangent integrals.
\end{abstract}

\maketitle

\section{Introduction}

Multiple zeta values are defined for integers $k_1,\dots,k_r\geq 1$
with $k_r\geq2$ by
\begin{align*}
 \zeta(k_1,\dots,k_r)
 \coloneqq
 \sum_{0<n_1<\cdots<n_r}
 \frac{1}{n_1^{k_1}\cdots n_r^{k_r}}.
\end{align*}
We call $k_1+\cdots+k_r$ the \emph{weight} and $r$ the \emph{depth} of
$\zeta(k_1,\dots,k_r)$.  Further, we set
$\zeta(\varnothing)=1$, which has weight $0$.  We denote by $\cZ$ the $\QQ$-vector space
spanned by all multiple zeta values and by $\cZ_k$ the space spanned by those
of weight $k$.  The space $\cZ$ is a $\QQ$-algebra.

Brown studied the algebra $\cH$ of motivic multiple zeta values $\zeta^{\mathfrak m}(k_1,\dots,k_r)$
\cite{Brown}.  It comes with a surjective algebra homomorphism
\begin{align*}
 \operatorname{per}:\cH
 &\longrightarrow\cZ,\\
 \zeta^{\mathfrak m}(k_1,\dots,k_r)
 &\longmapsto
 \zeta(k_1,\dots,k_r),
\end{align*}
called the period map.  Brown proved that there is a noncanonical graded
algebra isomorphism
\begin{align}\label{eq:f-alphabet}
 \cH
 \cong
 \QQ[f_2]\otimes
 \QQ\langle f_3,f_5,f_7,\dots\rangle_{\sh}.
\end{align}
The second factor is equipped with the shuffle product, and the letter
$f_k$ has weight $k$.  Conjecturally, the period map is injective and
thus an isomorphism.  Combining this with \eqref{eq:f-alphabet}, we therefore
expect a noncanonical algebra isomorphism
\begin{align}\label{eq:expected-isomorphism}
 \QQ[f_2]\otimes
 \QQ\langle f_3,f_5,f_7,\dots\rangle_{\sh}
 \overset{?}{\cong}\cZ.
\end{align}
Umezawa first studied iterated log-sine integrals in \cite{Umezawa-MZV},
generalizing the classical log-sine integrals (see, e.g.,
\cite{Borwein-Straub}), and later introduced shifted log-sine
integrals\footnote{Recently, Matsusaka \cite{Matsusaka} studied a
different, one-variable notion of shifted log-sine integrals.} in
\cite{Umezawa-evaluation}.  He conjectured that products of powers of
$\pi^2$ and these integrals with odd entries give a basis of the space
of multiple zeta values.  This family is indexed by words in odd integers and satisfies the
same shuffle product formula as the $f$-words, so it gives a natural
candidate for \eqref{eq:expected-isomorphism}.  However, it was not even
known if the shifted log-sine integrals are multiple zeta values, i.e., if
this candidate gives a well-defined map to $\cZ$.  In this work, we show
that this is the case.

Set $A(\theta)=\log|2\sin(\theta/2)|$.
For integers $k_1,\dots,k_r\geq2$, define the
\emph{shifted log-sine integral}
\begin{align*}
 \SLs(k_1,\dots,k_r)
 \coloneqq
 \int_{0<\theta_1<\cdots<\theta_r<\pi/3}
 \prod_{j=1}^r
 \left(\theta_j-\frac{\pi}{3}\right)^{k_j-2}A(\theta_j)\,
 d\theta_1\cdots d\theta_r.
\end{align*}
We set $\SLs(\varnothing)=1$.  In the following, we will only consider
odd entries.  For $k\geq0$, let
\begin{align}\label{eq:def-Sk}
 \cS_k=
 \operatorname{span}_{\QQ}
 \left\{
 \pi^{2m}\SLs(k_1,\dots,k_r)
 \mathrel{}\middle|\mathrel{}
 \begin{gathered}
 m,r\geq0,
 \quad k_1,\dots,k_r\geq3\text{ odd},\\
 2m+k_1+\cdots+k_r=k
 \end{gathered}
 \right\}.
\end{align}
Moreover, we set $\cS=\sum_{k\geq0}\cS_k$.
By Umezawa's shuffle product formula (see \eqref{eq:sls-shuffle}), the
space $\cS$ is a $\QQ$-algebra \cite[Proposition~4.1]{Umezawa-evaluation}.
Notice that this formula writes the product of two elements in
\eqref{eq:def-Sk} explicitly as a $\QQ$-linear combination of such
elements.  In contrast, by Brown's theorem the \emph{Hoffman elements},
i.e., the multiple zeta values with entries in $\{2,3\}$, span $\cZ$
\cite{Hoffman,Brown}, but no comparably simple formula is known for the
product of two Hoffman elements.

Umezawa conjectured that every multiple zeta value of weight $k$ and
depth $d$ lies in the span of the elements in \eqref{eq:def-Sk} with
$r\leq d$, and that the elements of weight $k$ form a basis of $\cZ_k$
\cite[Conjectures~2 and~3]{Umezawa-evaluation}.  Counting these elements
gives the generating series $1/(1-X^2-X^3)$, which is exactly the Hilbert
series of the algebra in \eqref{eq:f-alphabet}. The main result of this work is the following.

\begin{mainthm}\label{thm:main}
\begin{enumerate}
\item[\rm (i)] For odd $k_1,\dots,k_r\geq3$, we have
\begin{align*}
 \SLs(k_1,\dots,k_r)
 \in\cZ_{k_1+\cdots+k_r}.
\end{align*}
\item[\rm (ii)] In particular, the following is a $\QQ$-algebra
homomorphism.
\begin{align*}
 \QQ[f_2]\otimes
 \QQ\langle f_3,f_5,f_7,\dots\rangle_{\sh}
 &\longrightarrow \cZ,\\
 f_2^m\otimes f_{k_1}\cdots f_{k_r}
 &\longmapsto
 \pi^{2m}\SLs(k_1,\dots,k_r)
\end{align*}
\end{enumerate}
\end{mainthm}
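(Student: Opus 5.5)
We plan to realize $\SLs(k_1,\dots,k_r)$ as (the real part of, or a simple combination of) an iterated integral on $\mathbb{P}^1\setminus\{0,\infty,\mu_6\}$. Then we apply Glanois' Galois descent criterion to show that the corresponding motivic period actually lies in $\cH$, the level-one part.

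\textbf{Step 1: iterated integral representation.} Substitute $z=e^{i\theta}$. Then $A(\theta)=\operatorname{Re}\log(1-e^{i\theta})$, $d\theta=dz/(iz)$, and $\theta-\pi/3=-i\log(z/\xi)$ with $\xi=e^{i\pi/3}$. Each factor $(\theta_j-\pi/3)^{k_j-2}$ expands as $(k_j-2)!$ times an iterated integral of $dz/z$ based at $\xi$. Since $\log(1-z)$ is itself an iterated integral of $dz/(z-1)$, the integrand becomes an iterated integral along the arc from $1$ to $\xi$ in the forms $dz/z$ and $dz/(z-1)$, with real parts taken. The real part is handled by writing $\operatorname{Re}\log(1-z)=\tfrac12(\log(1-z)+\log(1-\bar z))$; on the unit circle $\bar z=1/z$, which introduces the form $dz/(z(1-z))$, still in the same alphabet. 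Deforming the path to a straight path $0\to\xi$ or $1\to\xi$ (with tangential base points), $\SLs$ becomes a $\QQ[i\pi]$-combination of multiple polylogarithms at sixth roots of unity. We then define a motivic lift $\SLs^{\mathfrak m}(k_1,\dots,k_r)\in\cHall$ whose period is $\SLs$ (up to a power of $i$, which odd entries should make real).

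\textbf{Step 2: Galois descent.} Glanois' criterion says: an element of $\cHall$ of weight $k$ lies in $\cH$ if (a) it is invariant under the relevant parity/conjugation conditions, (b) its coaction components $D_{2n+1}$ land in $\cH\otimes\mathcal{L}$ with the left factor of level one, and (c) the depth-one and weight-one components vanish or are level-one. We proceed by induction on $r$. The key computation is the derivations $D_{2n+1}$ of $\SLs^{\mathfrak m}(k_1,\dots,k_r)$ using Goncharov–Brown's coaction formula on the iterated integral from Step 1. We aim to show that they are of the form
\begin{align*}
D_{2n+1}\SLs^{\mathfrak m}(k_1,\dots,k_r)=\delta_{2n+1,k_1}\,\zeta^{\mathfrak{L}}_{\text{lev.1}}(\cdot)\otimes \SLs^{\mathfrak m}(k_2,\dots,k_r)+(\text{terms with }\SLs^{\mathfrak m}\text{ of lower depth}).
\end{align*}
Ideally this mirrors the deconcatenation coaction on $f$-words, i.e.\ $\SLs^{\mathfrak m}$ behaves like $f_{k_1}\cdots f_{k_r}$ modulo $\pi^2$. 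The left-hand factors are single Clausen-type values $\operatorname{Cl}_{k}(\pi/3)$-like pieces. With odd weight these reduce (via distribution relations at level six, or the known evaluation of $\SLs(k)$ in terms of $\zeta(k)$ due to Umezawa) to rational multiples of $\zeta^{\mathfrak m}(k)$ plus $\pi$-multiples. Odd entries are exactly what kills the unwanted $\log 2,\log3$ and genuinely level-six parts. Given this, the induction hypothesis and Brown's theorem that $\ker D_{<k}$ in weight $k$ of $\cH$ is $\QQ\zeta^{\mathfrak m}(k)$ give $\SLs^{\mathfrak m}\in\cH$ up to a possible level-six element of depth one. That element is fixed by a direct depth-one check. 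Applying per gives (i).

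\textbf{Step 3: part (ii).} Part (i) shows that the map lands in $\cZ$. Umezawa's shuffle formula \eqref{eq:sls-shuffle} matches the shuffle product of $f$-words, and $\pi^{2m}$ multiplies compatibly, so the map is an algebra homomorphism.

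\textbf{Main obstacle.} The main obstacle is Step 2: computing the coaction of the lifted integrals explicitly and verifying that all level-six contributions (terms involving $\log$ of primes $2,3$ and primitive sixth-root polylogarithms) cancel for odd entries. I would first try to do this at the level of generating series in the variables attached to the $(\theta-\pi/3)$ powers. The expected symmetry $\theta\mapsto 2\pi/3-\theta$ about $\pi/3$ should make even/odd parity visible there.
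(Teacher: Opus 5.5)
\textbf{There is a genuine gap in Step 2.}

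Your Steps 1 and 3 agree with the paper. The form $\frac{dz}{z-1}-\frac12\frac{dz}{z}$ that your $\operatorname{Re}\log(1-z)$ trick produces is exactly the paper's letter $y$, and part (ii) is Umezawa's shuffle formula. But Step 2, which carries all of part (i), is only stated as an aim, and as set up it would not go through.

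In Goncharov's formula the left factors are de Rham integrals over arbitrary consecutive segments of the word, ending at $\xi$ when the segment is terminal. Individually these are general level-six iterated integrals, not single Clausen-type values that distribution relations or the depth-one formula for $\SLs(k)$ could reduce. A descent criterion must also show that the even-weight components $D_{2n}$ vanish, and yours omits them. The right factors are quotient words of the integrand, with no evident reason to be lower-depth $\SLs^{\mathfrak m}$. So your induction on $r$ has no input, and the shape you expect for $D_{2n+1}$ is a conjecture, not a lemma.

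The role of parity is also misattributed. No $\log 2$ or $\log 3$ occurs for any entries: the substitution $u=(z-1)/(\xi-1)$ turns every word into an integral from $0$ to $1$ in the letters $0,\xi$ only, which is unramified by Glanois's criterion.

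The missing idea is to compute no $D_r$ at all and to use the full coaction, where only left factors need to be controlled. In the $g$-alphabet the coaction is deconcatenation. Hence if $Z\in\cH_6$ satisfies $\Delta Z-1\otimes Z-Z^{\mathfrak a}\otimes1\in\cA_{>0}\otimes\cH_{6,>0}$, then $Z$ can fail to be level one only through words ending in an even letter $g_m$ and through odd powers of $t$.

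To get level-one left factors, write the word in $e_0,e_1$ with $a_0=1$ and $a_{n+1}=\xi$. Every de Rham factor in Goncharov's formula except the last has endpoints and letters in $\{0,1\}$, so it lies in $\cA$. The last one is $I^{\mathfrak a}(a_s;\text{suffix};\xi)$. To control it, use the recursive words $W_{p_1,\dots,p_r}=x(q_{p_1}\sh W_{p_2,\dots,p_r})$. They satisfy $\Delta_{\mathrm{dec}}W_{\geq j}-1\otimes W_{\geq j}\in\fH_{>0}\otimes\mathfrak B_{j+1}$, where $\mathfrak B_{j+1}$ is the shuffle algebra generated by $x$, $y$ and the tails $W_{\geq i}$ for $i>j$. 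Since $\xi$ and $\xi-1=\xi^2$ are roots of unity, $I^{\mathfrak a}(1;x;\xi)=I^{\mathfrak a}(1;y;\xi)=0$. Path composition through $1$ and downward induction on the tails then put every last factor in $\cA$.

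The leftover terms are anti-invariant under complex conjugation. Unlike the single top-weight element $g_N$ or $t^N$ left over in a Brown-style argument, they cannot be removed by checking that the period is real. You need motivic invariance, which comes from $z\mapsto1/z$: it sends $x\mapsto-x$ and $y\mapsto y$, so $W_{\geq j}$ is fixed because every $p_i=k_i-2$ is odd. This is the only place where parity enters.
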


Conjecturally, the homomorphism in part~(ii) gives an isomorphism as
in \eqref{eq:expected-isomorphism}, but both surjectivity and
injectivity are open problems.

Umezawa showed that shifted log-sine integrals can be written in terms of
multiple polylogarithms at sixth roots of unity \cite{Umezawa-evaluation}.
In Section~\ref{sec:words}, we represent the shifted log-sine integrals by
iterated integrals of words in a shuffle algebra
(\Cref{lem:word-integral}).  This representation works for arbitrary
entries $k_j\geq2$, and it gives a new proof that all shifted log-sine
integrals are $\QQ(i)$-linear combinations of multiple polylogarithms at
sixth roots of unity (\Cref{rem:level-six}~(ii)).  To prove
the main theorem, we combine it with the motivic Galois descent from level
six to level one developed by Glanois \cite{Glanois}.  We use the motivic
setup of Brown and Glanois as a black box, i.e., we only use some of their
structural results, which we recall in Section~\ref{sec:proof}.  After
this setup, we just have to check the descent condition.  This is done
by a downward induction, which uses a right-coideal property of these
words (\Cref{lem:right-coideal}).

In \Cref{sec:zinbiel}, we give a reinterpretation of the main theorem.
The elements representing the shifted log-sine integrals generate a
free Zinbiel algebra on two generators, which gives a model for the
$f$-alphabet algebra in \eqref{eq:f-alphabet}.  The main theorem then
says that an iterated cotangent integral gives an algebra homomorphism
from this model to $\cZ$, and Umezawa's conjecture says that it is an
isomorphism.  A similar model was proposed by Chapoton \cite{Chapoton}.

Note that the proof does not give an explicit expression for
$\SLs(k_1,\dots,k_r)$ in terms of multiple zeta values.  In
\Cref{sec:formulas}, we collect explicit evaluations in depth one,
going back to Choi, Cho and Srivastava \cite{Choi-Cho-Srivastava}, and
some conjectural evaluations in depths two and three.

\subsection*{Acknowledgments}
The author would like to thank Ryota Umezawa for comments on an
earlier version of this paper and for making him aware of the work
\cite{Kuramitsu}, and Fr\'ed\'eric Chapoton for his helpful comments
on \Cref{sec:zinbiel}.  This project was partially supported by JSPS
KAKENHI Grant Number JP26K22254.

\section{Words for shifted log-sine integrals}\label{sec:words}

Set $\xi=e^{\pi i/3}$, and let $\gamma$ be the arc from $1$ to $\xi$ given by
$z=e^{i\theta}$ for $0\leq\theta\leq\pi/3$.  Along $\gamma$, set
\begin{align*}
 x&=\frac{dz}{z},
 &
 y&=-\frac{dz}{1-z}-\frac12\frac{dz}{z},\\
 X(z)&=\log\frac{z}{\xi},
 &
 B(z)&=\log\frac{1-z}{1-\xi}-\frac12X(z).
\end{align*}
Here, we choose the branches of both logarithms such that they are
continuous along $\gamma$ and vanish at $z=\xi$.  Then we have
\begin{align}\label{eq:arc-functions}
 X(e^{i\theta})=i\left(\theta-\frac{\pi}{3}\right),
 \qquad
 B(e^{i\theta})=A(\theta),
 \qquad
 dX=x,
 \qquad
 dB=y.
\end{align}

Let $\fH=\QQ\langle x,y\rangle$ be the shuffle algebra on $x,y$, and let
$\fH_{>0}$ be the span of its nonempty words.  For
$p\geq1$, let $x^{\sh p}$ be the $p$-fold shuffle product of $x$ and define
\begin{align*}
 q_p=x^{\sh p}\sh y
 =p!\sum_{\substack{a,b\geq0\\a+b=p}}x^ayx^b.
\end{align*}
Define homogeneous elements recursively by
\begin{align}\label{eq:def-W}
 W_\varnothing=1,
 \qquad
 W_{p_1,\dots,p_r}
 =x\bigl(q_{p_1}\sh W_{p_2,\dots,p_r}\bigr).
\end{align}
The first product on the right-hand side of \eqref{eq:def-W} is the
concatenation product.  In \Cref{sec:zinbiel}, we give an
interpretation of these elements in terms of Zinbiel algebras.

Write $\gamma(t)=e^{\pi it/3}$ for $0\leq t\leq1$.  If
$u=a_1\cdots a_n$ is a word in $x,y$ and
$\gamma^*a_j=f_j(t)\,dt$, set
\begin{align*}
 I_\gamma(u)
 =
 \int_{0<t_1<\cdots<t_n<1}
 f_1(t_1)\cdots f_n(t_n)\,dt_1\cdots dt_n
\end{align*}
when this integral converges.
We set $I_\gamma(1)=1$ and extend $I_\gamma$ linearly whenever all
monomial integrals converge.  Every monomial word occurring in
$W_{p_1,\dots,p_r}$, for $r\geq1$, begins with $x$, which is regular at
$1$, and both $x$ and $y$ are regular at $\xi$.  Since $y$ has at worst a
simple pole at $z=1$, the standard convergence criterion for iterated
integrals shows that all iterated integrals used below are convergent.

\begin{lem}\label{lem:word-integral}
For odd $k_1,\dots,k_r\geq3$, we have
\begin{align}\label{eq:word-integral}
 \SLs(k_1,\dots,k_r)
 =(-1)^{\sum_{j=1}^r(k_j-1)/2}
 I_\gamma(W_{k_1-2,\dots,k_r-2}).
\end{align}
The element $W_{k_1-2,\dots,k_r-2}$ is homogeneous of length
$k_1+\cdots+k_r$.
\end{lem}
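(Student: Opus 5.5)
The plan is to evaluate $I_\gamma$ through \emph{tail} iterated integrals. These turn the concatenation-and-shuffle recursion \eqref{eq:def-W} into a nested integral, and \eqref{eq:arc-functions} then turns that nested integral into $\SLs$. For a word $w=a_1\cdots a_n$ and $0\le s\le1$, set
\begin{align*}
 F_w(s)=\int_{s<t_1<\cdots<t_n<1}f_1(t_1)\cdots f_n(t_n)\,dt_1\cdots dt_n,
\end{align*}
which is the iterated integral of $w$ along $\gamma$ restricted to $[s,1]$, i.e.\ from $\gamma(s)$ to $\xi$. Extend this linearly and set $F_1=1$. By definition $I_\gamma(w)=F_w(0)$. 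I will use two standard facts, each proved by the usual decomposition of simplices. First, peeling off the first letter gives
\begin{align*}
 F_{aw}(s)=\int_s^1 f_a(t)\,F_w(t)\,dt .
\end{align*}
Second, the shuffle relation holds for iterated integrals over the same path $\gamma|_{[s,1]}$, so $F_{u\sh v}=F_uF_v$. Convergence at $s=0$ is exactly the criterion already recalled before the lemma.

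Next I compute the basic tails. Since $X(\xi)=B(\xi)=0$, $dX=x$ and $dB=y$, we get $F_x(s)=-X(\gamma(s))$ and $F_y(s)=-B(\gamma(s))$. Multiplicativity then gives
\begin{align*}
 F_{q_p}=F_{x^{\sh p}\sh y}=(-X)^p(-B)
\end{align*}
along $\gamma$. Combining this with the peeling formula and \eqref{eq:def-W} yields
\begin{align*}
 F_{W_{p_1,\dots,p_r}}(s)=\int_s^1 \gamma^*x\;(-X)^{p_1}(-B)\,F_{W_{p_2,\dots,p_r}} .
\end{align*}
Iterating, $I_\gamma(W_{p_1,\dots,p_r})$ becomes an $r$-fold nested integral over $0<t_1<\cdots<t_r<1$. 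The $j$-th level is the integrand $(-X)^{p_j}(-B)\,x$ at $t_j$, and the earlier variable sits on the outside with the smaller value of $t$.

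Now I substitute $z=e^{i\theta}$ with $\theta=\pi t/3$ and use \eqref{eq:arc-functions}: $x=i\,d\theta$, $X=i(\theta-\pi/3)$ and $B=A(\theta)$. Each level then contributes
\begin{align*}
 (-1)^{p+1}i^{p+1}\left(\theta-\frac{\pi}{3}\right)^{p}A(\theta)\,d\theta .
\end{align*}
With $p=k-2$ and $k$ odd, $p+1=k-1$ is even. Hence $(-1)^{p+1}=1$ and $i^{k-1}=(-1)^{(k-1)/2}$, which is exactly the sign in \eqref{eq:word-integral}. The region becomes $0<\theta_1<\cdots<\theta_r<\pi/3$, matching the definition of $\SLs$.

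For the length claim, $q_p$ is homogeneous of length $p+1$. By induction $W_{p_1,\dots,p_r}$ is homogeneous of length $1+(p_1+1)+\sum_{j\ge2}(p_j+2)=\sum_j(p_j+2)$, which equals $k_1+\cdots+k_r$ when $p_j=k_j-2$.

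The main obstacle is bookkeeping rather than anything deep. I have to get the orientation right, namely that the first letter corresponds to the point nearest $1$ and that the tails run toward $\xi$, where $X$ and $B$ vanish. I also have to justify $F_{u\sh v}=F_uF_v$ for the tail integrals and the interchange with the mildly singular $y$ near $z=1$. I would handle the latter by working on $[\varepsilon,1]$ and letting $\varepsilon\to0$, using integrability of $A(\theta)\sim\log\theta$.
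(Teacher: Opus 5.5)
Your proposal is correct and is essentially the paper's proof. Your tail integrals $F_w(s)$ are exactly the paper's $I_\theta$ (the iterated integral along the arc from $e^{i\theta}$ to $\xi$, with $\theta=\pi s/3$), and the argument runs through the same steps: $I_\theta(q_p)=I_\theta(x)^pI_\theta(y)$ by the shuffle formula, peeling off the leading $x$ to get the nested integral, substitution via \eqref{eq:arc-functions} to get the factor $i^{k-1}=(-1)^{(k-1)/2}$, and the count of $p+2=k$ letters per step. The differences are only cosmetic: you carry the sign $(-1)^{p+1}$ for general $p$ before specializing to odd $p$, and you spell out the $\varepsilon\to0$ convergence argument that the paper leaves to the standard criterion.
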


\begin{proof}
Let $I_\theta$ denote the iterated integral over the arc from
$e^{i\theta}$ to $\xi$, extended linearly in the same way as $I_\gamma$.  By \eqref{eq:arc-functions},
\begin{align*}
 I_\theta(x)=-X(e^{i\theta}),
 \qquad
 I_\theta(y)=-A(\theta).
\end{align*}
For odd $p$, the shuffle product formula gives
\begin{align*}
 I_\theta(q_p)
 =I_\theta(x)^pI_\theta(y)
 =X(e^{i\theta})^pA(\theta).
\end{align*}
Along the parametrization $z=e^{i\theta}$, the pullback of $x$ is
$i\,d\theta$.  The recursive step is
\begin{align*}
 I_\gamma(W_{p_1,\dots,p_r})
 &=\int_0^{\pi/3}
 I_\theta(q_{p_1}\sh W_{p_2,\dots,p_r})\,i\,d\theta\\
 &=\int_0^{\pi/3}
 X(e^{i\theta})^{p_1}A(\theta)
 I_\theta(W_{p_2,\dots,p_r})\,i\,d\theta.
\end{align*}
Iterating this identity gives
\begin{align*}
 I_\gamma(W_{p_1,\dots,p_r})
 =
 \int_{0<\theta_1<\cdots<\theta_r<\pi/3}
 \prod_{j=1}^r
 \left(X(e^{i\theta_j})^{p_j}A(\theta_j)i\,d\theta_j\right).
\end{align*}
If $p=k-2$ is odd, then
\begin{align*}
 X(e^{i\theta})^pA(\theta)i\,d\theta
 =(-1)^{(k-1)/2}
 \left(\theta-\frac{\pi}{3}\right)^{k-2}A(\theta)\,d\theta.
\end{align*}
This proves \eqref{eq:word-integral}.  Each operation
$x(q_p\sh\mathord{\cdot})$ adds $p+2=k$ letters.  Thus the element is
homogeneous of the stated length.
\end{proof}

For example, for $k=3$, we have $q_1=x\sh y=xy+yx$, and therefore
$W_1=x(xy+yx)=xxy+xyx$.  Since the pullbacks along $\gamma$ are given by
$x=i\,d\theta$ and
$y=\bigl(\frac{ie^{i\theta}}{e^{i\theta}-1}-\frac{i}{2}\bigr)d\theta
=\frac12\cot(\frac{\theta}{2})\,d\theta$,
\Cref{lem:word-integral} in this case reads
\begin{align*}
 \SLs(3)=-I_\gamma(xxy+xyx)
 =\frac12\int_{0<\theta_1<\theta_2<\theta_3<\pi/3}
 \left(\cot\frac{\theta_2}{2}+\cot\frac{\theta_3}{2}\right)
 d\theta_1\,d\theta_2\,d\theta_3.
\end{align*}
More generally, the same calculation gives, for every odd $k\geq3$,
\begin{align*}
 \SLs(k)=\frac{(k-2)!}{2}\sum_{j=2}^{k}\,
 \int_{0<\theta_1<\cdots<\theta_k<\pi/3}
 \cot\left(\frac{\theta_j}{2}\right)
 d\theta_1\cdots d\theta_k.
\end{align*}

\begin{rem}\label{rem:level-six}\leavevmode
\begin{enumerate}
\item[\rm (i)] A similar representation of the shifted log-sine
integrals, in terms of Yamamoto integrals, was obtained independently
by Kuramitsu in his master's thesis
\cite[Theorems~4.7 and~4.8]{Kuramitsu}.
\item[\rm (ii)] The definition \eqref{eq:def-W} and the proof of
\Cref{lem:word-integral} also work for arbitrary entries
$k_1,\dots,k_r\geq2$, where we set $q_0=y$.  In general, one obtains
\begin{align*}
 I_\gamma(W_{k_1-2,\dots,k_r-2})
 =(-i)^{k_1+\cdots+k_r-r}\,
 \SLs(k_1,\dots,k_r).
\end{align*}
By the change of variable in the proof of \Cref{prop:membership}, the
left-hand side is a $\QQ(i)$-linear combination of multiple polylogarithms
at sixth roots of unity (see \cite{Borwein-Broadhurst-Kamnitzer} for a
classical study of these values).  This gives a new proof of a result of
Umezawa \cite{Umezawa-evaluation}, who first showed that all shifted
log-sine integrals can be written in this way.
\end{enumerate}
\end{rem}

Next, we prove the right-coideal property of the elements $W_{\geq j}$, which
will be a key ingredient in the proof of the main theorem.  Equip
$\fH$ with the deconcatenation coproduct
\begin{align*}
 \Delta_{\mathrm{dec}}(a_1\cdots a_N)
 =\sum_{j=0}^N
 a_1\cdots a_j\otimes a_{j+1}\cdots a_N.
\end{align*}
Fix $p_1,\dots,p_r$ and set $W_{\geq j}=W_{p_j,\dots,p_r}$ for
$1\leq j\leq r$ as well as $W_{\geq r+1}=1$.  For $1\leq j\leq r+1$,
let $\mathfrak B_j$ be the shuffle subalgebra generated by $x$, $y$, and
the elements $W_{\geq i}$ for $j\leq i\leq r$.  In particular,
$\mathfrak B_{r+1}$ is generated by $x$ and $y$.

\begin{lem}\label{lem:right-coideal}
For $1\leq j\leq r$, we have
\begin{align}\label{eq:right-coideal}
 \Delta_{\mathrm{dec}}W_{\geq j}-1\otimes W_{\geq j}
 \in
 \fH_{>0}\otimes\mathfrak B_{j+1}.
\end{align}
In particular, every $\mathfrak B_j$ is a right coideal subalgebra,
i.e., we have $\Delta_{\mathrm{dec}}\mathfrak B_j\subset\fH\otimes\mathfrak B_j$.
\end{lem}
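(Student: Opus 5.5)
We argue by downward induction on $j$, using two standard facts about the deconcatenation coproduct on the shuffle algebra $\fH$. First, $\Delta_{\mathrm{dec}}$ is an algebra homomorphism for the shuffle product, i.e.\ $\Delta_{\mathrm{dec}}(u\sh v)=\Delta_{\mathrm{dec}}(u)\sh\Delta_{\mathrm{dec}}(v)$ with the componentwise shuffle on $\fH\otimes\fH$. Second, for a letter $a$ and a word $w$,
\begin{align*}
 \Delta_{\mathrm{dec}}(aw)=1\otimes aw+(a\otimes1)\cdot\Delta_{\mathrm{dec}}(w),
\end{align*}
where the second term means left concatenation by $a$ in the first tensor factor.

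\emph{Step 1.} We compute $\Delta_{\mathrm{dec}}q_p$. Since $\Delta_{\mathrm{dec}}x=1\otimes x+x\otimes1$, the homomorphism property gives
\begin{align*}
 \Delta_{\mathrm{dec}}q_p=\sum_{a+b=p}\binom{p}{a}\,x^{\sh a}\sh(\text{first factors})\otimes\cdots,
\end{align*}
and more usefully, every term has the form $u\otimes v$ with $v$ a shuffle product of copies of $x$, possibly together with one $y$. Hence
\begin{align*}
 \Delta_{\mathrm{dec}}q_p\in\fH\otimes\mathfrak B_{r+1},
\end{align*}
and the same holds with $\mathfrak B_{r+1}$ replaced by any $\mathfrak B_{j+1}$, since these contain $x$ and $y$.

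\emph{Step 2 (induction).} Write $V=q_{p_j}\sh W_{\geq j+1}$, so that $W_{\geq j}=xV$. By the concatenation formula,
\begin{align*}
 \Delta_{\mathrm{dec}}W_{\geq j}-1\otimes W_{\geq j}=(x\otimes1)\cdot\Delta_{\mathrm{dec}}V,
\end{align*}
whose first tensor factors all begin with $x$ and so lie in $\fH_{>0}$. It therefore suffices to show that $\Delta_{\mathrm{dec}}V\in\fH\otimes\mathfrak B_{j+1}$. By the homomorphism property, $\Delta_{\mathrm{dec}}V=\Delta_{\mathrm{dec}}q_{p_j}\sh\Delta_{\mathrm{dec}}W_{\geq j+1}$. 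By Step 1 the second tensor factors of $\Delta_{\mathrm{dec}}q_{p_j}$ lie in $\mathfrak B_{r+1}\subset\mathfrak B_{j+1}$. By the induction hypothesis for $j+1$ (or trivially when $j=r$, since $W_{\geq r+1}=1$), we have
\begin{align*}
 \Delta_{\mathrm{dec}}W_{\geq j+1}\in 1\otimes W_{\geq j+1}+\fH\otimes\mathfrak B_{j+2}\subset\fH\otimes\mathfrak B_{j+1},
\end{align*}
because $W_{\geq j+1}\in\mathfrak B_{j+1}$ and $\mathfrak B_{j+2}\subset\mathfrak B_{j+1}$. Since $\mathfrak B_{j+1}$ is closed under shuffle, the product lies in $\fH\otimes\mathfrak B_{j+1}$, which proves \eqref{eq:right-coideal}.

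\emph{Step 3 (coideal statement).} From \eqref{eq:right-coideal}, each generator of $\mathfrak B_j$ satisfies $\Delta_{\mathrm{dec}}g\in\fH\otimes\mathfrak B_j$: this holds for $W_{\geq i}$ with $i\geq j$ because $\mathfrak B_{i+1}\subset\mathfrak B_j$ and $W_{\geq i}\in\mathfrak B_j$, and it holds for $x$ and $y$ directly. Since $\Delta_{\mathrm{dec}}$ is a shuffle homomorphism and $\mathfrak B_j$ is a subalgebra, the property $\Delta_{\mathrm{dec}}(\cdot)\in\fH\otimes\mathfrak B_j$ passes to all shuffle products of generators, and by linearity to all of $\mathfrak B_j$.

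\emph{Main obstacle.} The only nontrivial input is that deconcatenation is multiplicative for the shuffle product, which is the standard bialgebra fact. The rest is bookkeeping; the main care needed is in Step 2, to check that the leading $x$ forces the first factor into $\fH_{>0}$ and that the term $1\otimes W_{\geq j}$ is exactly the one removed.
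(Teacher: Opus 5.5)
Your proof is correct and follows the paper's argument: primitivity of $x,y$ and shuffle-multiplicativity of $\Delta_{\mathrm{dec}}$ give $\Delta_{\mathrm{dec}}q_p\in\fH\otimes\mathfrak B_{r+1}$. The prefix formula $\Delta_{\mathrm{dec}}(xF)=1\otimes xF+(x\otimes1)\Delta_{\mathrm{dec}}F$ applied to $F=q_{p_j}\sh W_{\geq j+1}$, with downward induction, yields \eqref{eq:right-coideal}, and the coideal statement follows by checking generators. The only blemish is the schematic display in Step~1, which you could replace by the explicit expansion $\sum_{a+b=p}\binom{p}{a}\bigl(x^{\sh a}\sh y\otimes x^{\sh b}+x^{\sh a}\otimes x^{\sh b}\sh y\bigr)$; the conclusion you draw from it is right.
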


\begin{proof}
The coproduct $\Delta_{\mathrm{dec}}$ is an algebra homomorphism for the
shuffle product.  Since $x$ and $y$ are primitive, we have
\begin{align}\label{eq:coproduct-qp}
 \Delta_{\mathrm{dec}}q_p
 =
 (1\otimes x+x\otimes1)^{\sh p}
 \sh(1\otimes y+y\otimes1)
 \in
 \fH\otimes\mathfrak B_{r+1}.
\end{align}

For $P(F)=xF$, where the product is concatenation, we have
\begin{align}\label{eq:prefix-coproduct}
 \Delta_{\mathrm{dec}}P(F)
 =1\otimes P(F)+(P\otimes\operatorname{id})
 \Delta_{\mathrm{dec}}F.
\end{align}
Apply \eqref{eq:prefix-coproduct} to
$F=q_{p_j}\sh W_{\geq j+1}$, where the case $j=r$ is
$F=q_{p_r}$.  By \eqref{eq:coproduct-qp} and downward
induction,
\begin{align*}
 \Delta_{\mathrm{dec}}F
 \in
 \fH\otimes\mathfrak B_{j+1}.
\end{align*}
Equation \eqref{eq:prefix-coproduct} now gives
\eqref{eq:right-coideal}.  Applying the same argument to the generators of
$\mathfrak B_j$ gives the last assertion.
\end{proof}

\section{Proof of the main theorem}\label{sec:proof}

The motivic fundamental group at roots of unity was introduced by Deligne and
Goncharov \cite{Deligne-Goncharov}.  The level-six case was studied by
Deligne \cite{Deligne}, and its Galois descent to level one was developed by
Glanois \cite{Glanois}.  We recall the results which we need.  Let $\cHall$ be
the algebra of motivic periods of the category
$\operatorname{MT}(\mathbb{Z}[\xi][1/6])$ of \cite{Deligne-Goncharov} (see
\cite[(1.5)]{Glanois}).  It contains all motivic multiple zeta values relative
to $\mu_6$.  Let $\cH_6\subset\cHall$ be the corresponding algebra for the
category $\operatorname{MT}(\mathbb{Z}[\xi])$.  It contains the unramified
motivic multiple zeta values relative to $\mu_6$
(cf. \cite[Section~4.4.4]{Glanois}).  Let $\cA_6$ and $\cA$ be the coordinate
rings of the unipotent parts of the motivic Galois groups of
$\operatorname{MT}(\mathbb{Z}[\xi])$ and $\operatorname{MT}(\mathbb{Z})$.  The
motivic coaction $\Delta:\cH_6\rightarrow\cA_6\otimes\cH_6$ is the
restriction of the corresponding coaction on $\cHall$, and the algebras
$\cH$ and $\cA$ are subalgebras of $\cH_6$ and $\cA_6$.

We use the following standard form of the results of Brown and Glanois as a
black box.  The Lie algebra of the unipotent part of the motivic Galois group
is free with one generator in every odd weight $m\geq3$ at level one and
with one generator in every weight $m\geq2$ at level six
(see \cite[Section~2.5]{Brown} and \cite[Lemma~2.1]{Glanois}).  Together
with Brown's theorem and the Galois descent from level six to level one
\cite[Sections~3.3 and~4.4.4]{Glanois}, this gives noncanonical graded
comodule algebra isomorphisms
\begin{equation}\label{eq:level-six-alphabet}
\begin{aligned}
 \cH_6
 &\cong
 \QQ\langle g_2,g_3,g_4,\dots\rangle_{\sh}\otimes\QQ[t],\\
 \cH
 &\cong
 \QQ\langle g_3,g_5,g_7,\dots\rangle_{\sh}\otimes\QQ[t^2],
\end{aligned}
\end{equation}
which are compatible with the inclusion $\cH\subset\cH_6$.  We use $g_m$
to distinguish this level-six alphabet from the $f$-alphabet in the
introduction.  The letter $g_m$ has weight $m$, the element $t$ has weight $1$ and
corresponds to $(2\pi i)^{\mathfrak m}$, and the coaction is
deconcatenation on the $g$-words, where $t$ stays in the right factor.  Explicitly,
\begin{align*}
 \Delta(g_{i_1}\cdots g_{i_r}t^s)
 =\sum_{a=0}^r
 g_{i_1}\cdots g_{i_a}
 \otimes g_{i_{a+1}}\cdots g_{i_r}t^s,
\end{align*}
where the empty word is $1$.  In other words, the odd letters are the
level-one letters and the even letters are the new level-six letters.  Complex
conjugation acts on $\cHall$ by conjugating the path, the endpoints and the
letters of a motivic iterated integral (see \cite[Appendix~A]{Deligne}).  We
denote this involution by $c$.  It
fixes $\cH$ pointwise and satisfies $c(t)=-t$.  Since the generators of the
free Lie algebras above can be chosen as eigenvectors of $c$, the
isomorphisms in \eqref{eq:level-six-alphabet} can moreover be chosen such that
$c$ acts on $\cA_6\cong\QQ\langle g_2,g_3,\dots\rangle_{\sh}$ letterwise by
\begin{align}\label{eq:conjugation-alphabet}
 c(g_{i_1}\cdots g_{i_r})
 =(-1)^{(i_1-1)+\cdots+(i_r-1)}\,g_{i_1}\cdots g_{i_r}.
\end{align}
Here the signs are given by the depth-one conjugation relation
$\zeta^{\mathfrak a}(m;\xi^{-1})=(-1)^{m-1}\zeta^{\mathfrak a}(m;\xi)$ of
Deligne and Goncharov (see \cite[Lemmas~3.1 and~3.2]{Glanois}).

For $Z\in\cHall$, denote by $Z^{\mathfrak a}$ its de Rham image, i.e., its
image in the coordinate ring of the unipotent part of the motivic Galois group
of $\operatorname{MT}(\mathbb{Z}[\xi][1/6])$, which contains $\cA_6$.  On
$\cH_6$, this is the natural projection $\cH_6\rightarrow\cA_6$, which in
\eqref{eq:level-six-alphabet} is given by $t\mapsto0$.  We use the same
superscript for the images of motivic iterated integrals.  Set
\begin{align*}
 \widetilde\Delta Z
 =\Delta Z-1\otimes Z-Z^{\mathfrak a}\otimes1.
\end{align*}
We write $\cA_{>0}$ and $\cH_{6,>0}$ for the positive-weight parts of
$\cA$ and $\cH_6$.  The condition in the following lemma implies that all proper prefixes
of a $g$-word consist of odd letters.  Besides the elements of $\cH$,
this just leaves words ending in one even letter and odd powers of
$t$.  Both are then excluded by complex conjugation.

\begin{lem}\label{lem:full-descent}
Let $Z\in\cH_6$ be homogeneous of positive weight and fixed by complex
conjugation.  Suppose that
\begin{align*}
 \widetilde\Delta Z
 \in
 \cA_{>0}\otimes\cH_{6,>0}.
\end{align*}
Then $Z\in\cH$.
\end{lem}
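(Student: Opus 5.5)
We work throughout in the model \eqref{eq:level-six-alphabet}. Write $Z=\sum_{w,s} c_{w,s}\,w\,t^s$, where $w$ runs over $g$-words and $s\ge0$. Only pairs with $\operatorname{wt}(w)+s$ equal to the weight $k>0$ of $Z$ occur. By the explicit coaction formula,
\begin{align*}
 \widetilde\Delta Z=\sum_{w,s}c_{w,s}\sum_{\substack{w=uv\\ u\neq\varnothing,\ (v,s)\neq(\varnothing,0)}} u\otimes v\,t^s .
\end{align*}
Here the term $u=w$, $v=\varnothing$, $s=0$ is exactly $Z^{\mathfrak a}\otimes1$, and it is removed. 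The term $u=\varnothing$ is $1\otimes Z$, also removed. The monomials $u\otimes v t^s$ are linearly independent in $\cA_6\otimes\cH_6$, and so is the whole family $\{u\otimes vt^s\}$. Moreover $\cA\subset\cA_6$ corresponds to the span of words in odd letters. So the hypothesis $\widetilde\Delta Z\in\cA_{>0}\otimes\cH_{6,>0}$ says that every prefix $u$ appearing in these sums is a word in odd letters.

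The first step is to read off the support of $Z$. Take $(w,s)$ with $c_{w,s}\neq0$, and let $u$ be a nonempty prefix of $w$ with $u\neq w$, or with $u=w$ when $s>0$. Collecting the coefficient of $u\otimes vt^s$, we get a coefficient $c_{w,s}$ attached to a single pair, since $w=uv$ is determined by $u$ and $v$. Hence $u$ must consist of odd letters. This gives three possibilities for a monomial $w\,t^s$ in the support of $Z$:
\begin{enumerate}
\item[\rm (a)] $s>0$: then $w$ itself is odd. Apply the condition to $u=w$ if $w\neq\varnothing$; if $w=\varnothing$ the monomial is just $t^s$.
\item[\rm (b)] $s=0$ and all proper prefixes of $w$ are odd: then $w$ is either an odd word, or an odd word followed by one even letter.
\item[\rm (c)] The empty word with $s=0$: this has weight $0$ and is excluded because $k>0$.
\end{enumerate}
So $Z=Z_1+Z_2+Z_3$. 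Here $Z_1$ is a combination of $w\,t^{2s}$ with $w$ odd, and hence lies in $\cH$. Next, $Z_2$ is a combination of $w\,t^{s}$ with $w$ odd and $s$ odd. Finally, $Z_3$ is a combination of $u\,g_{2m}$ with $u$ odd.

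The second step uses complex conjugation. Since $c$ is compatible with the isomorphism, acts letterwise on words by \eqref{eq:conjugation-alphabet}, and satisfies $c(t)=-t$, we get
\begin{align*}
 c(w\,t^s)=(-1)^{\sum(i_j-1)+s}\,w\,t^s .
\end{align*}
For $w$ odd the sign is $(-1)^s$. So $c$ acts by $+1$ on $Z_1$ and by $-1$ on $Z_2$. For $u\,g_{2m}$ with $u$ odd, the sign is $(-1)^{2m-1}=-1$. The monomials are linearly independent, so $c(Z)=Z$ forces $Z_2=Z_3=0$. Hence $Z=Z_1\in\cH$.

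The main point needing care is the compatibility statements that are being used as a black box. First, $c$ must act on $\cH_6$, and not only on $\cA_6$, by the letterwise sign together with $t\mapsto -t$. This requires the isomorphism to be chosen $c$-equivariantly as a comodule algebra, and I would justify it by noting that $c$ commutes with the coaction. Second, the image of $\cH$ under the isomorphism must be exactly the span of odd words times even powers of $t$. With these granted, the argument is pure bookkeeping on monomials.
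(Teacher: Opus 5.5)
Your support analysis is correct and matches the paper's proof. The paper projects $\widetilde\Delta Z$ onto the right factors $t^s$ and $g_m$, while you read off coefficients in the basis $u\otimes v\,t^s$, which is the same argument.

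The gap is in the conjugation step. You use that $c$ acts diagonally on the basis $w\,t^s$ of all of $\cH_6$, with sign $(-1)^{\sum(i_j-1)+s}$. The black box only provides the following:
\begin{itemize}
\item[(1)] the letterwise action \eqref{eq:conjugation-alphabet} on $\cA_6$;
\item[(2)] $c|_{\cH}=\mathrm{id}$ and $c(t)=-t$;
\item[(3)] the projection $\cH_6\to\cA_6$ commutes with $c$.
\end{itemize}
The elements of $\cH_6$ corresponding to $g$-words are a noncanonical lift. So these facts only give $c(u\,g_{2m})+u\,g_{2m}\in t\cH_6$, not $c(u\,g_{2m})=-u\,g_{2m}$. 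Your suggested justification, that $c$ commutes with the coaction, does not make a given lift equivariant. You would have to choose the splitting $c$-equivariantly, for instance by a fixed-point argument on the unipotent torsor of splittings. That is an extra input you have not supplied.

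The paper's argument avoids this input entirely. The words $u\,g_{2m}$ occur only with $s=0$, so $Z^{\mathfrak a}$ is the $s=0$ part of $Z_1$ plus $Z_3$. Since $c(Z^{\mathfrak a})=(cZ)^{\mathfrak a}=Z^{\mathfrak a}$, and $c$ fixes odd words and negates each $u\,g_{2m}$ in $\cA_6$, you get $Z_3=0$. Then $Z=Z_1+Z_2$ with $Z_1\in\cH$ and $Z_2=t\,Z_2'$ for some $Z_2'\in\cH$. Now $c(Z)=Z_1-Z_2$ follows from (2) and the multiplicativity of $c$ alone, and $c(Z)=Z$ gives $Z_2=0$.
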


\begin{proof}
By \eqref{eq:level-six-alphabet}, we can write
\begin{align*}
 Z=\sum_{s\geq0}Z_s t^s,
 \qquad
 Z_s\in\QQ\langle g_2,g_3,g_4,\dots\rangle_{\sh}.
\end{align*}
Set
\begin{align*}
 V=\QQ\langle g_3,g_5,g_7,\dots\rangle_{\sh}.
\end{align*}
If $s>0$, projecting the hypothesis to the right factor $t^s$
gives
\begin{align*}
 Z_s-\varepsilon(Z_s)\in V,
\end{align*}
where $\varepsilon$ is the counit.  Since the constant part also belongs
to $V$, we have $Z_s\in V$.

For $s=0$, write uniquely
\begin{align*}
 Z_0=\lambda+\sum_{m\geq2}A_mg_m,
\end{align*}
where the last product is concatenation.  Projecting the hypothesis to
the right factor $g_m$ shows that the
nonconstant part of $A_m$ belongs to $V$.  Its constant part already
belongs to $V$, and we get
\begin{align*}
 Z_0\in V+\sum_{m\geq2\text{ even}}Vg_m.
\end{align*}
The projection $\cH_6\rightarrow\cA_6$ commutes with $c$, so $Z_0=Z^{\mathfrak a}$
is fixed by $c$.  By \eqref{eq:conjugation-alphabet}, $c$ fixes $V$ and
changes the sign of every term $Vg_m$ with $m$ even, so $Z_0\in V$.
We obtain $Z\in V\otimes\QQ[t]$, which by \eqref{eq:level-six-alphabet} equals
$\cH+t\cH$.  Write $Z=Z'+tZ''$ with $Z',Z''\in\cH$.  Since $c$ fixes $\cH$
and $c(t)=-t$, applying $c$ gives $Z''=0$, i.e., $Z\in\cH$.
\end{proof}

\begin{prop}\label{prop:membership}
For odd $k_1,\dots,k_r\geq3$, we have
\begin{align*}
 \SLs(k_1,\dots,k_r)
 \in\cZ_{k_1+\cdots+k_r}.
\end{align*}
\end{prop}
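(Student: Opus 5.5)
We will lift the words $W_{\geq j}$ to motivic iterated integrals along $\gamma$ and verify the hypotheses of \Cref{lem:full-descent} by downward induction on $j$. First we make the integrals into level-six periods. Substitute $z=\xi w$, or equivalently view $\gamma$ as the straight-line path from $1$ to $\xi$ up to homotopy in $\mathbb{P}^1\setminus\{0,1,\infty\}$. The forms $x=dz/z$ and $y=-dz/(1-z)-\frac12 dz/z$ have poles only at $0,1,\infty$. Hence $I_\gamma(u)$ is an iterated integral from $1$ to $\xi$, with tangential basepoint at $1$ since words begin with $x$, of the forms $\omega_0,\omega_1$. For $u\in\fH$ we define the motivic lift $I^{\mathfrak m}(u)=I^{\mathfrak m}(1,u,\xi)$ in $\cHall$. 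Composing paths $1\to0\to\xi$ expresses it through motivic multiple polylogarithms at $\xi$.

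We then check that it lies in $\cH_6$, i.e.\ is unramified. For this we use Glanois' criterion \cite[Section~4.4.4]{Glanois}: only the letters $0,1,\xi$ occur (no $-1$, $\xi^2$), and $1-\xi=\xi^{-1}$ is a unit, so the integrals are unramified. We set $Z_j=\pm I^{\mathfrak m}(W_{\geq j})$, of weight $\sum_{i\ge j}k_i$; its period is $\pm\SLs(k_j,\dots,k_r)$ by \Cref{lem:word-integral}.

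For complex conjugation, the path $\gamma$ is mapped to the arc from $1$ to $\xi^{-1}$. The reflection $z\mapsto 1/z$ maps this arc back to $\gamma$ reversed, and it sends $x\mapsto -x$ and $y\mapsto -y$. Indeed, $y=\frac12\frac{1+z}{1-z}\frac{dz}{z}\cdot(-1)$ is odd under $z\mapsto1/z$, which matches $y=\frac12\cot(\theta/2)d\theta$ being real along the arc. Combined with path reversal and the fact that $W$ has odd length times sign conventions, we get $c(Z_j)=\pm Z_j$. The sign should be $+$ exactly because the realization is real: the $(-1)^{(k-1)/2}$ factors in \Cref{lem:word-integral} make $\SLs$ real. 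We will verify this sign at the motivic level by the same computation, since the reflection is an automorphism of $\mathbb{P}^1\setminus\{0,1,\infty\}$ fixing $1$ and swapping $\xi,\xi^{-1}$.

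The main step is the coaction. Goncharov--Brown's formula for $\Delta I^{\mathfrak m}(1;u;\xi)$ is in general quadratic in subsequences. However, for a path on the alphabet $\{0,1\}$ with endpoints $1,\xi$, we use the composition structure: $\Delta$ corresponds, via the $\mathfrak{a}$-realization, to the deconcatenation pairing the de Rham path on $\{0,1\}$ with the motivic path. More concretely, we plan to use the formula $\Delta I^{\mathfrak m}(\gamma;u)=\sum I^{\mathfrak a}(\text{Lie/dch part})\otimes\dots$ and reduce it to $\Delta_{\mathrm{dec}}$ in the right factor. The right factors are then $I^{\mathfrak m}$ of elements of $\mathfrak B_{j+1}$ by \Cref{lem:right-coideal}, which are shuffle polynomials in $x,y,W_{\geq i}$ with $i>j$. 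Their motivic lifts are polynomials in $\log^{\mathfrak m}\xi=\pm(2\pi i)^{\mathfrak m}/6$, in $\log^{\mathfrak m}(1-\xi)=\mp(2\pi i)^{\mathfrak m}/6$ (so $I^{\mathfrak m}(y)\in\QQ t$), and in $Z_i$, which by induction lie in $\cH$. The left factors that accompany nonconstant right factors must then be shown to lie in $\cA$. This is the hard point, which we expect to handle by writing $\Delta Z_j$ through the coaction on $\pi_1^{\mathrm{dR}}$, where the left factor involves $\mathfrak a$-periods of words in $\{0,1\}$ from $1$ to $\xi$. We hope to argue that the left factor is constrained only through its membership in $\cA_6$, with the right factor in $\cH_{6,>0}$ (polynomials in $t$ and level-one elements); that is exactly what \Cref{lem:full-descent} needs if we redo its proof allowing $\cA_6$ on the left. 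Then \Cref{lem:full-descent} gives $Z_j\in\cH$, and applying the period map yields the claim for $j=1$.
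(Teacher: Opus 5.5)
\textbf{There is a genuine gap.} Your overall frame matches the paper's: lift $W_{\geq j}$ motivically along $\gamma$, check unramifiedness, run a downward induction, and finish with \Cref{lem:full-descent}. But the decisive coaction step is missing, and the route you sketch for it puts the right-coideal property on the wrong side of the coaction.

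In Goncharov's formula \eqref{eq:goncharov}, the motivic factor $I^{\mathfrak m}(1;a_{i_1},\dots,a_{i_k};\xi)$ comes from an arbitrary subsequence of the word, not from a suffix. So the right factors are not integrals of elements of $\mathfrak B_{j+1}$. In any case, \Cref{lem:full-descent} constrains only the left factors. The suffix appears instead in the \emph{de Rham} factor $I^{\mathfrak a}(a_s;a_{s+1},\dots,a_n;\xi)$ attached to the last selected position $s$. This is the only de Rham factor that can leave $\cA$, since all the others have endpoints and letters in $\{0,1\}$.

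The missing idea is to group the terms by $s$, use \Cref{lem:right-coideal} to assemble the suffixes into elements $F\in\mathfrak B_{j+1}$, and show $I^{\mathfrak a}(a;F;\xi)\in\cA$ for $a\in\{0,1\}$. For $a=1$ this follows from three facts: shuffle multiplicativity, the induction hypothesis $Z_i^{\mathfrak a}\in\cA$ for $i>j$, and $I^{\mathfrak a}(1;x;\xi)=I^{\mathfrak a}(1;y;\xi)=0$ (de Rham logarithms of roots of unity vanish, using $1-\xi=\xi^{-1}$). For $a=0$ one composes paths through $1$ and uses once more that $\mathfrak B_{j+1}$ is a right coideal. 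This gives $\widetilde\Delta Z_j\in\cA_{>0}\otimes\cH_{6,>0}$.

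Your fallback is false: you propose rerunning \Cref{lem:full-descent} with $\cA_6$ on the left and only polynomials in $t$ and level-one elements on the right. The lemma works precisely because it forces all proper prefixes of $g$-words to consist of odd letters. For a counterexample, take $M\in\cH_6$ of weight $2$ with $c(M)=-M$ and $M^{\mathfrak a}\neq0$ (e.g.\ the $c$-anti-invariant part of $\zeta^{\mathfrak m}(2;\xi)$), and set $Z=(2\pi i)^{\mathfrak m}M$. Then $c(Z)=Z$. Since $\cA_6$ is zero in weight one, $\widetilde\Delta Z=M^{\mathfrak a}\otimes(2\pi i)^{\mathfrak m}$, whose right factor lies in $\QQ t$. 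Yet $Z\notin\cH$, because $\cH_3=\QQ\zeta^{\mathfrak m}(3)$ has $\widetilde\Delta=0$. Its period is a multiple of $\pi\,\mathrm{Im}\,\mathrm{Li}_2(\xi)$.

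Your conjugation step also needs repair, in two places:
\begin{enumerate}
\item[(a)] The map $h(z)=1/z$ sends $\overline\gamma$ onto $\gamma$ with the \emph{same} orientation; there is no reversal.
\item[(b)] $h^*y=y$, not $-y$: in $y=-\frac12\frac{1+z}{1-z}\frac{dz}{z}$ both factors change sign.
\end{enumerate}
With a reversal you would get integrals of the reversed words, which are not $\pm W_{\geq j}$. Moreover, reality of the period only fixes the sign once $c(Z_j)=\pm Z_j$ is already known. With the correct signs, $h^*q_p=-q_p$ for odd $p$, the two signs in \eqref{eq:def-W} cancel at each step, and $c(Z_j)=Z_j$ follows directly.
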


\begin{proof}
For $a\in\mathbb C$, set $e_a=dz/(z-a)$.  Then
\begin{align*}
 x=e_0,
 \qquad
 y=e_1-\frac12e_0.
\end{align*}
We expand a word in $x,y$ in the letters $e_0,e_1$ and define its motivic
iterated integral term by term.  We consider these iterated integrals on
$\mathbb{P}^1\setminus(\{0,\infty\}\cup\mu_6)$, and, since the endpoints
$1$ and $\xi$ of $\gamma$ lie in $\mu_6$, we use tangential base points
as in \cite[Section~4.3]{Deligne-Goncharov}.  All tangent vectors below are
roots of unity, and for de Rham images their choice is irrelevant: for a
root of unity $\eta$, the logarithm $\log(\eta)$ is a rational multiple of
$2\pi i$, so $\log^{\mathfrak m}(\eta)$ is a rational multiple of the
element $t$ and $\log^{\mathfrak a}(\eta)=0$
\cite[Section~5.4]{Deligne-Goncharov}.

Set $p_j=k_j-2$.  Every monomial word
occurring in $W_{\geq j}$ begins with $x$, which is regular at $1$, and both $x$
and $y$ are regular at $\xi$.  In particular, the motivic iterated integrals
\begin{align*}
 Z_j=I^{\mathfrak m}_\gamma(W_{\geq j})
\end{align*}
are convergent and do not depend on the tangential base points.
The change of variable
\begin{align*}
 u=\frac{z-1}{\xi-1}
\end{align*}
turns the path from $1$ to $\xi$ into a path from $0$ to $1$ and gives
\begin{align*}
 x=\frac{du}{u-\xi},
 \qquad
 y=\frac{du}{u}-\frac12\frac{du}{u-\xi}.
\end{align*}
Since $\xi-1=\xi^2$, we have
\begin{align*}
 z=1+\xi^2u=\xi^2(u-\xi).
\end{align*}
This shows that the image of $\gamma$ is the lower short arc of
$|u-\xi|=1$ from $0$ to $1$.  The region between this arc and the interval
$[0,1]$ contains no sixth root of unity, so the arc is homotopic to
the straight path in $\mathbb{P}^1\setminus(\{0,\infty\}\cup\mu_6)$.
Every transformed monomial word starts with $du/(u-\xi)$ and contains
only the letters $0$ and $\xi$.  After grouping consecutive zeros into blocks, its motivic iterated integral has the form
\begin{align*}
 I^{\mathfrak m}
 (0;\xi,0^{n_1-1},\dots,\xi,0^{n_d-1};1)
 =(-1)^d\zeta^{\mathfrak m}
 \binom{n_1,\dots,n_d}{1,\dots,1,\xi^{-1}},
\end{align*}
where $d\geq1$, $n_1,\dots,n_d\geq1$, and $0^n$ denotes $n$ repeated
zeroes.  The right-hand side is a motivic
multiple zeta value relative to $\mu_6$ in Glanois's notation.  In this
notation, the associated roots of unity $\eta_1,\dots,\eta_d$ are
exactly the nonzero letters of the transformed words, so they are all
equal to $\xi$.  By Glanois's unramified criterion, $Z_j$ therefore
belongs to $\cH_6$ \cite[Definition~2.2 and Lemma~4.14]{Glanois}.

We prove $Z_j\in\cH$ by downward induction on $j$.  Assume that this is
known for $Z_{j+1},\dots,Z_r$.  We first prove the coaction part of the
induction step,
\begin{align}\label{eq:coaction-Zj}
 \widetilde\Delta Z_j
 \in
 \cA_{>0}\otimes\cH_{6,>0}.
\end{align}
Let $F\in\mathfrak B_{j+1}$.  The evaluation
$F\mapsto I^{\mathfrak a}(1;F;\xi)$ along $\gamma$ is an algebra homomorphism for the
shuffle product.  By induction, the evaluations
of the elements $W_{\geq i}$ for $i>j$ are
$Z_{j+1}^{\mathfrak a},\dots,Z_r^{\mathfrak a}$ and belong to $\cA$.
Using $\xi-1=\xi^2$, we have
\begin{align*}
 I^{\mathfrak a}(1;x;\xi)
 &=\log^{\mathfrak a}(\xi)=0,\\
 I^{\mathfrak a}(1;y;\xi)
 &=\log^{\mathfrak a}(\xi-1)
 -\frac12\log^{\mathfrak a}(\xi)=0.
\end{align*}
This gives
\begin{align}\label{eq:evaluation-one}
 I^{\mathfrak a}(1;F;\xi)\in\cA.
\end{align}
For the path from $0$ to $\xi$, regularized path composition through $1$
(\cite[Section~2, property~(iv)]{Glanois}, with the empty-word terms
included) gives
\begin{align}\label{eq:path-composition}
 I^{\mathfrak a}(0;F;\xi)
 =\sum_{\Delta_{\mathrm{dec}}F}
 I^{\mathfrak a}(0;F_{(1)};1)
 I^{\mathfrak a}(1;F_{(2)};\xi).
\end{align}
By \Cref{lem:right-coideal}, we may choose the decomposition
$\Delta_{\mathrm{dec}}F=\sum F_{(1)}\otimes F_{(2)}$ with
$F_{(2)}\in\mathfrak B_{j+1}$.  The first factor in
\eqref{eq:path-composition} has endpoints and letters in $\{0,1\}$ and
belongs to $\cA$, and the second factor belongs to $\cA$ by
\eqref{eq:evaluation-one}.  We obtain
\begin{align}\label{eq:evaluation-terminal}
 I^{\mathfrak a}(a;F;\xi)\in\cA
 \qquad
 (a\in\{0,1\},\ F\in\mathfrak B_{j+1}).
\end{align}

We now apply the full coaction.  For a monomial word
$e_{a_1}\cdots e_{a_n}$ in the expansion of $W_{\geq j}$, set $a_0=1$ and
$a_{n+1}=\xi$.  By Goncharov's formula \cite{Goncharov}, with tangential
regularization as in \cite[Theorem~3.3]{Glanois}, we have
\begin{equation}\label{eq:goncharov}
\begin{aligned}
 &\Delta I^{\mathfrak m}(a_0;a_1,\dots,a_n;a_{n+1})\\
 &\qquad=\sum_{0=i_0<i_1<\cdots<i_k<i_{k+1}=n+1}
 \ \prod_{p=0}^{k}
 I^{\mathfrak a}(a_{i_p};a_{i_p+1},\dots,a_{i_{p+1}-1};a_{i_{p+1}})
 \otimes
 I^{\mathfrak m}(a_0;a_{i_1},\dots,a_{i_k};a_{n+1}),
\end{aligned}
\end{equation}
where the sum runs over all subsets $\{i_1<\cdots<i_k\}$ of
$\{1,\dots,n\}$, and where the right factor is taken along $\gamma$.  We
apply \eqref{eq:goncharov} linearly to $W_{\geq j}$.  The empty subset gives
$Z_j^{\mathfrak a}\otimes1$, and the full subset gives $1\otimes Z_j$.  In
all other terms both factors have positive weight, and we show that their
left factors lie in $\cA$.  For this, fix the last selected position
$s=i_k\in\{1,\dots,n\}$.  All factors except the last de Rham factor
$I^{\mathfrak a}(a_s;a_{s+1},\dots,a_n;\xi)$ then depend only on the prefix
$e_{a_1}\cdots e_{a_s}$.  So the total contribution of the terms with last
selected position $s$ is the image of
$(\Delta_{\mathrm{dec}}W_{\geq j})_{s,n-s}\in\fH_s\otimes\fH_{n-s}$
under the linear map given on monomials by
\begin{align*}
 e_{a_1}\cdots e_{a_s}\otimes w
 \longmapsto
 \sum_{i_k=s}
 \ \prod_{p=0}^{k-1}
 I^{\mathfrak a}(a_{i_p};a_{i_p+1},\dots,a_{i_{p+1}-1};a_{i_{p+1}})
 \, I^{\mathfrak a}(a_s;w;\xi)
 \otimes
 I^{\mathfrak m}(1;a_{i_1},\dots,a_{i_k};\xi),
\end{align*}
where the sum runs over all subsets $\{i_1<\cdots<i_k\}$ of $\{1,\dots,s\}$ containing $s$.
Here $\fH_s$ denotes the length-$s$ part of $\fH$.  For a monomial
$e_{a_1}\cdots e_{a_s}$ and $F\in\mathfrak B_{j+1}$, the image of
$e_{a_1}\cdots e_{a_s}\otimes F$ lies in $\cA\otimes\cHall$, since the
de Rham factors in the product have endpoints and letters in $\{0,1\}$, and
since $I^{\mathfrak a}(a_s;F;\xi)\in\cA$ by \eqref{eq:evaluation-terminal}.
As $(\Delta_{\mathrm{dec}}W_{\geq j})_{s,n-s}\in\fH_s\otimes\mathfrak B_{j+1}$
by \eqref{eq:right-coideal}, we obtain
\begin{align*}
 \widetilde\Delta Z_j
 \in
 \cA_{>0}\otimes\cHall_{>0}.
\end{align*}
Since $Z_j\in\cH_6$, we also have $\widetilde\Delta Z_j\in\cA_6\otimes\cH_6$,
and \eqref{eq:coaction-Zj} follows.

It remains to check that $Z_j$ is fixed by complex conjugation.  The
letters $e_0,e_1$ are real, and the conjugate path of $\gamma$ is
$h\circ\gamma$, where $h(z)=1/z$.  Directly from the definitions,
\begin{align*}
 h^*x=-x,
 \qquad
 h^*y=y.
\end{align*}
Since $p_j$ is odd, this gives $h^*q_{p_j}=-q_{p_j}$.  The two signs in
\eqref{eq:def-W} cancel at each step, and therefore
$h^*W_{\geq j}=W_{\geq j}$.  As $Z_j$ is convergent, the tangential base
points play no role, and functoriality gives
\begin{align*}
 c(Z_j)
 =I^{\mathfrak m}_{h\circ\gamma}(W_{\geq j})
 =I^{\mathfrak m}_{\gamma}(h^*W_{\geq j})
 =Z_j.
\end{align*}

By \Cref{lem:word-integral}, $W_{\geq j}$ is homogeneous of length
$k_j+\cdots+k_r$.  So $Z_j$ is homogeneous of this weight, and
\Cref{lem:full-descent} applied to \eqref{eq:coaction-Zj} gives
$Z_j\in\cH$.  This completes the induction.  The period of $Z_j$ is
$\SLs(k_j,\dots,k_r)$ up to a sign, and taking $j=1$ proves the proposition.
\end{proof}

\begin{proof}[Proof of \Cref{thm:main}]
Part~(i) is \Cref{prop:membership}.  For part~(ii), Umezawa's shuffle product
formula, with shuffle multiplicities kept as in
\cite[Proposition~4.1]{Umezawa-evaluation}, gives
\begin{align}\label{eq:sls-shuffle}
 \SLs(k_1,\dots,k_r)\SLs(l_1,\dots,l_s)
 =\sum_{\mathbf a\in
 (k_1,\dots,k_r)\sh(l_1,\dots,l_s)}
 \SLs(\mathbf a).
\end{align}
The assignment in part~(ii) defines a $\QQ$-linear map on the basis
given by the elements $f_2^m\otimes f_{k_1}\cdots f_{k_r}$.  By
part~(i), it takes values in $\cZ$.  Equation
\eqref{eq:sls-shuffle} shows that it respects the shuffle product, and
powers of $f_2$ are sent to the corresponding powers of $\pi^2$.  This proves
part~(ii).
\end{proof}

\section{Zinbiel algebras and iterated cotangent integrals}\label{sec:zinbiel}

In this section, we give a reinterpretation of \Cref{thm:main} and,
inspired by \Cref{lem:word-integral}, introduce iterated cotangent
integrals.  Zinbiel algebras were studied by Chapoton \cite{Chapoton}
in connection with multiple zeta values, which is also how the author
became aware of them.

A \emph{Zinbiel algebra}\footnote{The curious reader wondering where
the name Zinbiel comes from should read it backwards.} (Loday
\cite{Loday}) is a $\QQ$-vector space
with a bilinear product $\prec$ satisfying
\begin{align}\label{eq:zinbiel}
 (u\prec v)\prec w=u\prec(v\prec w)+u\prec(w\prec v)
\end{align}
for all elements $u,v,w$.  The symmetrized product $u\prec v+v\prec u$
is then commutative and associative.  On the space $\fH_{>0}$, a
Zinbiel product is defined by
\begin{align}\label{eq:half-shuffle}
 (au)\prec v\coloneqq a(u\sh v)
\end{align}
for a letter $a\in\{x,y\}$ and words $u,v$, and its symmetrization is
the shuffle product.  With this product, $\fH_{>0}$ is the free
Zinbiel algebra on $\{x,y\}$ (see \cite{Loday,Chapoton}).  When
adjoining the empty word $1$ as the unit for the shuffle product, we
use the conventions $u\prec1=u$ and $1\prec u=0$ for $u\in\fH_{>0}$.  By
\eqref{eq:zinbiel}, the Zinbiel subalgebra generated by two elements
$c_1,c_2\in\fH_{>0}$ is spanned by the right-nested products
$c_{j_1}\prec(c_{j_2}\prec(\cdots\prec c_{j_N}))$ with
$j_1,\dots,j_N\in\{1,2\}$.

Let $\fM_{>0}$ be the Zinbiel subalgebra generated by $xx$ and
$W_1=xxy+xyx$, and set $\fM=\QQ\oplus\fM_{>0}$.  Since the shuffle
product is the symmetrization of $\prec$, the space $\fM$ is a
subalgebra of $(\fH,\sh)$.

\begin{prop}\label{prop:zinbiel}\leavevmode
\begin{enumerate}
\item[\rm (i)] $\fM_{>0}$ is the free Zinbiel algebra on the two elements
$xx$ and $W_1$.
\item[\rm (ii)] The elements $(xx)^{\sh m}\sh W_{p_1,\dots,p_r}$ with
$m,r\geq0$ and odd $p_j\geq1$ form a $\QQ$-basis of $\fM$.
\item[\rm (iii)] The following is an algebra isomorphism.
\begin{align*}
 \QQ[f_2]\otimes\QQ\langle f_3,f_5,f_7,\dots\rangle_{\sh}
 &\overset{\sim}{\longrightarrow}\fM\\
 f_2^m\otimes f_{k_1}\cdots f_{k_r}
 &\longmapsto(xx)^{\sh m}\sh W_{k_1-2,\dots,k_r-2}
\end{align*}
\end{enumerate}
\end{prop}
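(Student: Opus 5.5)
The plan is to identify $\fM$ with the span $\cS'$ of the elements $(xx)^{\sh m}\sh W_{p_1,\dots,p_r}$ with odd $p_j$, and then obtain freeness by a dimension count. Throughout, I use that the symmetrization of $\prec$ is $\sh$, together with the formula $(au)\prec v=a(u\sh v)$. Since $x^{\sh p}=p!\,x^p$, we have
\begin{align*}
 (xx)^{\sh m}=\frac{x^{\sh 2m}}{2^m},
\end{align*}
and hence, for $v\in\fH$,
\begin{align*}
 W_1\prec\bigl((xx)^{\sh m}\sh v\bigr)
 =x\bigl(x\sh y\sh(xx)^{\sh m}\sh v\bigr)
 =\frac{1}{2^m}\,x\bigl(q_{2m+1}\sh v\bigr).
\end{align*}
Taking $v=W_{p_2,\dots,p_r}$, this says that $W_{2m+1,p_2,\dots,p_r}$ is a nonzero multiple of $W_1\prec\bigl((xx)^{\sh m}\sh W_{p_2,\dots,p_r}\bigr)$.

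\textbf{Step 1: $\cS'\subset\fM$.} I induct on weight. Since $\fM$ is closed under $\sh$ (the symmetrized product), it contains $(xx)^{\sh m}$ and all shuffles of its elements. The identity above shows that each $W_{p_1,\dots,p_r}$ lies in $W_1\prec\fM\subset\fM$, so every basis candidate of (ii) lies in $\fM$.

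\textbf{Step 2: $\cS'$ is a shuffle subalgebra.} I prove that $W_{\mathbf p}\sh W_{\mathbf p'}$ equals the sum of the $W_{\mathbf a}$ over the shuffles $\mathbf a\in\mathbf p\sh\mathbf p'$. The argument is by induction on total length, using $W_{p,\mathbf p}=x(q_p\sh W_{\mathbf p})$ and the recursion
\begin{align*}
 xu\sh xv=x(u\sh xv)+x(xu\sh v),
\end{align*}
exactly as for half-shuffles: $W_{\mathbf p}\sh W_{\mathbf p'}=W_{\mathbf p}\prec W_{\mathbf p'}+W_{\mathbf p'}\prec W_{\mathbf p}$, and each half-shuffle is computed recursively by the displayed identity. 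This is the word-level analogue of Umezawa's formula \eqref{eq:sls-shuffle}, which is in fact what is being integrated in \Cref{lem:word-integral}. Consequently the map in (iii) is a surjective algebra homomorphism onto $\cS'$.

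\textbf{Step 3: dimensions and conclusion.} The free Zinbiel algebra on generators of weights $2$ and $3$ has the same graded dimensions as the free associative algebra on them. Its Hilbert series is therefore $1/(1-X^2-X^3)-1$, and $\fM_{>0}$, being a quotient of it, has graded dimensions at most these. The elements listed in (ii) have exactly this generating series. So if they are linearly independent, then $\dim\cS'_k\ge\dim\fM_k$, and combined with $\cS'\subset\fM$ we get $\cS'=\fM$. Moreover the surjection from the free Zinbiel algebra onto $\fM_{>0}$ is then an isomorphism, which gives (i). Statement (ii) follows directly, and (iii) follows from Step 2 together with bijectivity on bases.

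\textbf{Main obstacle: linear independence.} The crux is showing that the elements $(xx)^{\sh m}\sh W_{\mathbf p}$ are linearly independent in $\fH$. My first attempt will be a leading-word argument. I order words by comparing the positions of the letters $y$ (say lexicographically on the vector of gaps between consecutive $y$'s), and identify in $(xx)^{\sh m}\sh W_{p_1,\dots,p_r}$ a distinguished monomial with nonzero coefficient, such as
\begin{align*}
 x^{p_1+1}\,y\,x^{p_2+1}\,y\cdots x^{p_r+1}\,y\,x^{2m}\quad\text{(or its extremal variant)},
\end{align*}
which determines $(m;p_1,\dots,p_r)$ uniquely. Positivity of all coefficients helps, since no cancellation can occur among monomials.

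If this bookkeeping becomes messy, a fallback is an inductive argument. I would use the right-coideal structure from \Cref{lem:right-coideal}, or alternatively strip the leading $x$ and induct on the number of $y$'s, which equals $r$, to separate the different $r$ and then the different $p_1$.
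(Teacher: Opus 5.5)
Your Steps 1--3 are correct as a reduction. The inclusion of the span $\cS'$ of the elements in (ii) into $\fM$, the word-level shuffle formula for the $W_{\mathbf p}$, and the bound on $\dim\fM_k$ from the surjection of the free Zinbiel algebra on generators of weights $2$ and $3$ are all fine. Together they reduce the proposition to the linear independence of the elements $(xx)^{\sh m}\sh W_{\mathbf p}$ in $\fH$. But that independence is the whole content of the statement, and the method you propose for it cannot work. Already in weight $5$,
\begin{align*}
 (xx)\sh W_1&=6x^4y+6x^3yx+5x^2yx^2+3xyx^3,\\
 W_3&=6\,(x^4y+x^3yx+x^2yx^2+xyx^3).
\end{align*}
So the two candidates have exactly the same support, and both of your distinguished monomials $x^2yx^2$ and $x^4y$ occur in both. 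Hence no ordering of words makes a leading-word argument work directly on these elements. Positivity does not help, because the obstruction is not cancellation inside one element but coinciding supports of different elements. The fallback does not address this either. The number of $y$'s is a grading, so separating different $r$ is free. The example has $r=1$, however, and separating $m$ from $p_1$ is exactly the difficulty; you give no mechanism for it, and \Cref{lem:right-coideal} concerns deconcatenation and does not bear on it.

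The missing idea is to run leading-word arguments only where supports do separate. First prove (i) directly. For the lexicographic order with $y>x$ and $F$ whose words begin with $x$, one has $\LT(xx\prec F)=x\LT(F)x$ and $\LT(W_1\prec F)=xy\LT(F)x$. So the right-nested product $c_1\prec(c_2\prec(\cdots\prec c_N))$ with $c_j\in\{xx,W_1\}$ has leading word $x^{i_1}y\,x^{i_2-i_1}y\cdots y\,x^{2N-i_s}$, which records the positions $i_1<\cdots<i_s$ of the factors $W_1$. These products span $\fM_{>0}$ by the Zinbiel identity, so they form a basis, which is (i). Then prove (ii) upstairs, in the free Zinbiel algebra on two letters $a,b$. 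With $B_r=b\prec a^{\sh r}=r!\,ba^r$, the element $a^{\sh m}\sh(B_{r_1}\prec(\cdots\prec B_{r_s}))$ has leading word $a^mba^{r_1}\cdots ba^{r_s}$ for the lexicographic order with $a>b$, so these elements form a basis. The map $a\mapsto xx$, $b\mapsto W_1$, injective by (i), sends them to positive multiples of $(xx)^{\sh m}\sh W_{2r_1+1,\dots,2r_s+1}$. In your weight-$5$ example the preimages are $a\sh b=ab+ba$ and $b\prec a=ba$, which are separated by their leading words before they are mapped into $\fH$. Even granting (i), your dimension count only shows that $\dim\fM_k$ equals the number of candidates. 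You still need this second step, or a proof that $\cS'$ is closed under $\prec$ (so that $\cS'=\fM$), to conclude.
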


\begin{proof}
Since $W_{p_1}=x\,q_{p_1}$, the definition \eqref{eq:def-W} can be
written as
\begin{align}\label{eq:W-prec}
 W_{p_1,\dots,p_r}=W_{p_1}\prec W_{p_2,\dots,p_r}.
\end{align}
Moreover, $x^{\sh2}=2\,xx$ gives $q_{p+2}=2\,(q_p\sh xx)$, and
therefore
\begin{align}\label{eq:W-step}
 W_{p+2}=2\,(W_p\prec xx),
 \qquad
 W_{2r+1}=2^{r}\,\bigl(W_1\prec(xx)^{\sh r}\bigr).
\end{align}
So all $W_{p_1,\dots,p_r}$ with odd entries lie in $\fM_{>0}$, and hence
all elements $(xx)^{\sh m}\sh W_{p_1,\dots,p_r}$ lie in $\fM$.

Order the words of a fixed length lexicographically with $y>x$, and
write $\LT(F)$ for the largest word of a nonzero homogeneous
$F\in\fH$.  Then the largest word of $x\sh F$ is $\LT(F)x$, and the
largest word of $y\sh F$ is $y\LT(F)$.  For $F$ with all words
beginning with $x$, we get
\begin{align*}
 \LT(xx\prec F)=x\LT(F)x
 \qquad\text{and}\qquad
 \LT(W_1\prec F)=xy\LT(F)x,
\end{align*}
since all words of $xy\sh F$ begin with $x$ and are therefore smaller
than the largest word $y\LT(F)x$ of $yx\sh F$.  Both operations
preserve the properties that all words begin with $x$ and that the
leading coefficient is positive.  By induction, we get
\begin{align}\label{eq:leading-word}
 \LT\bigl(c_1\prec(c_2\prec(\cdots\prec c_N))\bigr)
 =x^{i_1}y\,x^{i_2-i_1}y\cdots x^{i_s-i_{s-1}}y\,x^{2N-i_s}
\end{align}
for $c_j\in\{xx,W_1\}$, where $i_1<\cdots<i_s$ are the positions with
$c_j=W_1$, and where the right-hand side is understood as $x^{2N}$ if
$s=0$.  The right-hand side of \eqref{eq:leading-word} determines
$(c_1,\dots,c_N)$, and therefore the right-nested products are
linearly independent.  This proves part (i).

The free Zinbiel algebra on two letters $a,b$ has the nonempty words
in $a,b$ as a basis.  For $r\geq0$, set
$B_r=b\prec a^{\sh r}=r!\,b\,a^r$.  For the
lexicographic order with $a>b$, the same induction as above gives
\begin{align*}
 \LT\bigl(a^{\sh m}\sh(B_{r_1}\prec(\cdots\prec B_{r_s}))\bigr)
 =a^m\,b\,a^{r_1}\,b\,a^{r_2}\cdots b\,a^{r_s},
\end{align*}
where for $s=0$ the left-hand side is understood as $a^{\sh m}$ and
the right-hand side as $a^m$.  Every word in $a,b$ is of this form
for exactly one $(m,r_1,\dots,r_s)$, so these elements, together
with $1$, form a basis after adjoining the unit for the shuffle
product.  The Zinbiel
homomorphism $a\mapsto xx$, $b\mapsto W_1$ also respects the shuffle
products and is injective by part (i).  By \eqref{eq:W-prec} and
\eqref{eq:W-step}, it sends $B_{r_1}\prec(\cdots\prec B_{r_s})$ to a
positive rational multiple of $W_{2r_1+1,\dots,2r_s+1}$.  Its image is $\fM_{>0}$, so the elements
$(xx)^{\sh m}\sh W_{p_1,\dots,p_r}$ with odd entries form a basis of
$\fM$.  This proves part (ii).

For (iii) and nonempty index tuples $\mathbf p,\mathbf q$,
symmetrizing \eqref{eq:W-prec} and using \eqref{eq:zinbiel}
gives
\begin{align}\label{eq:index-shuffle}
 W_{\mathbf p}\sh W_{\mathbf q}
 =W_{p_1}\prec\bigl(W_{p_2,\dots,p_r}\sh W_{\mathbf q}\bigr)
 +W_{q_1}\prec\bigl(W_{\mathbf p}\sh W_{q_2,\dots,q_s}\bigr)
 =\sum_{\mathbf r\in\mathbf p\,\sh\,\mathbf q}
 W_{\mathbf r},
\end{align}
where the last equality follows by induction and the sum runs over all
shuffles of the two index tuples.  So the map in (iii) is an algebra
homomorphism, and by part (ii) it sends a basis to a basis.
\end{proof}

\begin{rem}
By \Cref{prop:zinbiel}~(i), the right-nested products
\begin{align*}
 K(c_1\cdots c_N)
 =c_1\prec\bigl(c_2\prec(\cdots\prec c_N)\bigr),
 \qquad
 c_j\in\{xx,W_1\},
\end{align*}
form a basis of $\fM_{>0}$, indexed by the words $c_1\cdots c_N$ in
the two generators.  The same induction as in
\eqref{eq:index-shuffle} gives
\begin{align*}
 K(\mathbf c)\sh K(\mathbf d)
 =\sum_{\mathbf e\in\mathbf c\,\sh\,\mathbf d}K(\mathbf e),
\end{align*}
where the sum runs over all shuffles of the two indexing words.
Chapoton uses the corresponding basis in his construction
\cite{Chapoton}.
\end{rem}

For a word $w=a_1\cdots a_n$ beginning with the letter $a_1=x$, define
the \emph{iterated cotangent integral}
\begin{align*}
 C(w)
 \coloneqq
 \int_{0<t_1<\cdots<t_n<\pi/6}
 \prod_{j=1}^n g_{a_j}(t_j)\,dt_j,
 \qquad
 g_x(t)=1,
 \quad
 g_y(t)=\cot(t),
\end{align*}
and set $C(1)=1$.  Since the first letter is $x$ and $\cot$ has
only a simple pole at $0$, these integrals converge, and we extend $C$
to a $\QQ$-linear map on $\QQ+x\fH$.  This space is closed under the
shuffle product, and iterated integrals satisfy the shuffle product
formula, so $C$ is an algebra homomorphism.  For example, we have
$C(xx)=\frac{1}{12}\zeta(2)=\frac{\pi^2}{72}$.  The pullbacks along
$\gamma$ are given by
$x=i\,d\theta$ and $y=\frac12\cot(\frac{\theta}{2})\,d\theta$, so the
substitution $\theta=2t$ gives $I_\gamma(w)=(2i)^{d}\,C(w)$ for a word
$w$ with $d$ letters $x$.  Therefore, \Cref{rem:level-six}~(ii)
becomes
\begin{align}\label{eq:S-as-C}
 \SLs(k_1,\dots,k_r)
 =(-2)^{k_1+\cdots+k_r-r}\,
 C(W_{k_1-2,\dots,k_r-2})
\end{align}
for all $k_1,\dots,k_r\geq2$.  For example, we have $\SLs(3)=4\,C(W_1)$,
and therefore $C(W_1)=\frac16\zeta(3)$ by \Cref{prop:depth-one} below.
So the two generators of $\fM_{>0}$ are sent to $\frac{1}{12}\zeta(2)$
and $\frac16\zeta(3)$.  Notice that applying \eqref{eq:S-as-C} to
\eqref{eq:index-shuffle} gives a new proof of Umezawa's shuffle
product formula \eqref{eq:sls-shuffle}.  With this, we can reformulate
\Cref{thm:main} as follows.

\begin{thm}\label{thm:cotangent}
The iterated cotangent integral restricts to a $\QQ$-algebra
homomorphism
\begin{align*}
 C\colon\fM\longrightarrow\cZ,
\end{align*}
which sends homogeneous elements of length $k$ into $\cZ_k$.
\end{thm}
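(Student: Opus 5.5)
The plan is to reduce \Cref{thm:cotangent} to \Cref{thm:main} using the basis of $\fM$ from \Cref{prop:zinbiel}~(ii) and the comparison \eqref{eq:S-as-C}.

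First, $C$ is already known to be a $\QQ$-algebra homomorphism on $\QQ+x\fH$. We check that $\fM\subset\QQ+x\fH$. Every word of $xx$ and of $W_1$ begins with $x$. By \eqref{eq:half-shuffle}, $(au)\prec v=a(u\sh v)$ begins with the same letter as $au$, so every element of $\fM_{>0}$ is a combination of words beginning with $x$. Since $\fM$ is a shuffle subalgebra of $\fH$, the restriction $C|_{\fM}$ is an algebra homomorphism. It therefore only remains to show that $C$ maps the length-$k$ part of $\fM$ into $\cZ_k$.

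By \Cref{prop:zinbiel}~(ii), the length-$k$ part of $\fM$ is spanned by the homogeneous elements
\begin{align*}
(xx)^{\sh m}\sh W_{p_1,\dots,p_r},\qquad 2m+\textstyle\sum_j(p_j+2)=k,
\end{align*}
with odd $p_j\geq1$. Here we use that $W_{p_1,\dots,p_r}$ has length $\sum_j(p_j+2)$ by \Cref{lem:word-integral}, and that the decomposition into lengths is compatible with the shuffle product.

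Applying the homomorphism $C$ to such a basis element gives
\begin{align*}
C(xx)^m\,C(W_{p_1,\dots,p_r}) = \left(\frac{\pi^2}{72}\right)^m(-2)^{-(k_1+\cdots+k_r-r)}\SLs(k_1,\dots,k_r),
\end{align*}
where $k_j=p_j+2$ and we have used \eqref{eq:S-as-C}. The $k_j$ are odd and at least $3$, so \Cref{thm:main}~(i) gives $\SLs(k_1,\dots,k_r)\in\cZ_{k_1+\cdots+k_r}$. We also have $\pi^{2m}\in\QQ^\times\zeta(2)^m\subset\cZ_{2m}$. Since $\cZ$ is a graded-compatible algebra, in the sense that $\cZ_a\cZ_b\subset\cZ_{a+b}$, the product lies in $\cZ_k$. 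By linearity, $C$ maps the whole length-$k$ part of $\fM$ into $\cZ_k$.

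There is no real obstacle, because the substantive input is \Cref{thm:main}. The only points that need care are:
\begin{enumerate}
\item[\rm (a)] verifying that $\fM$ lies in the domain $\QQ+x\fH$ of $C$, so that the shuffle-homomorphism property can be used;
\item[\rm (b)] checking that the constants in \eqref{eq:S-as-C} and in $C(xx)$ are rational multiples of the right powers of $\pi$, so that weights match lengths.
\end{enumerate}
The product property $\cZ_a\cZ_b\subset\cZ_{a+b}$ follows from the stuffle product of multiple zeta values.
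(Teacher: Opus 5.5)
Your proposal is correct and is essentially the paper's argument: you use that $C$ is a shuffle homomorphism on $\QQ+x\fH\supset\fM$, reduce to the basis of \Cref{prop:zinbiel}~(ii), and evaluate each basis element as $(\pi^2/72)^m(-2)^{-(k_1+\cdots+k_r-r)}\SLs(k_1,\dots,k_r)$ via \eqref{eq:S-as-C} and \Cref{thm:main}. Your extra checks make explicit what the paper leaves implicit, namely that the half-shuffle preserves the initial letter $x$ and that $\cZ_a\cZ_b\subset\cZ_{a+b}$.
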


\begin{proof}
The map $C$ is an algebra homomorphism on $\QQ+x\fH$, and
$\fM\subset\QQ+x\fH$.  By \Cref{prop:zinbiel}, it therefore suffices
to check the claim on the basis elements.  By \eqref{eq:S-as-C} and
\Cref{thm:main}, we have
\begin{align*}
 C\bigl((xx)^{\sh m}\sh W_{k_1-2,\dots,k_r-2}\bigr)
 =\left(\frac{\pi^2}{72}\right)^{m}
 \frac{\SLs(k_1,\dots,k_r)}{(-2)^{k_1+\cdots+k_r-r}},
\end{align*}
which lies in $\cZ_{2m+k_1+\cdots+k_r}$.
\end{proof}

Composing the isomorphism in \Cref{prop:zinbiel}~(iii) with $C$ gives
the homomorphism in \Cref{thm:main}~(ii), up to the rational factors
$72^{m}(-2)^{k_1+\cdots+k_r-r}$ on the basis elements.  In particular,
the conjecture of Umezawa that the elements in \eqref{eq:def-Sk} of
weight $k$ form a basis of $\cZ_k$
(cf.\ \cite[Conjectures~2 and~3]{Umezawa-evaluation}) can now be
stated as follows: conjecturally,
\begin{align*}
 C\colon\fM\overset{\overset{?}{\sim}}{\longrightarrow}\cZ
\end{align*}
is an isomorphism, which would give the expected isomorphism
\eqref{eq:expected-isomorphism}.  One could also define a motivic
version $C^{\mathfrak m}\colon\fM\rightarrow\cH$ of $C$.  For this
map, injectivity and surjectivity are equivalent, since the graded
dimensions of $\fM$ and $\cH$ agree by \eqref{eq:f-alphabet} and
\Cref{prop:zinbiel}, but both are still open.

A similar construction was considered by Chapoton \cite{Chapoton}.
In his work, he constructs free Zinbiel algebras on two generators in
weights $2$ and $3$ inside an algebra of formal iterated integrals and
conjectures that, for generic parameters, these give models for the
motivic multiple zeta values.  In our case, the algebra $\fM$ comes with the evaluation
map $C$, and \Cref{thm:cotangent} shows that its periods are
multiple zeta values.  It would be interesting to understand the
relationship between these two constructions.

\begin{rem}
Set $W_0=xy$, let $\fN_{>0}$ be the Zinbiel subalgebra of $\fH_{>0}$
generated by $x$ and $W_0$, and set $\fN=\QQ\oplus\fN_{>0}$.  The
same leading-word argument as in the proof of \Cref{prop:zinbiel}
shows that $\fN_{>0}$ is free on these two generators.  Moreover, we
have $xx=x\prec x$ and $W_1=W_0\prec x$, and hence $\fM\subset\fN$.
The Hilbert series of $\fN$ is $\frac{1}{1-T-T^2}$, which by
\eqref{eq:level-six-alphabet} is also the Hilbert series of $\cH_6$.
Since $\fN\subset\QQ+x\fH$, the first part of the proof of
\Cref{prop:membership} gives a graded algebra homomorphism
$I^{\mathfrak m}_\gamma\colon\fN\rightarrow\cH_6$, and it is natural
to ask if this is an isomorphism.  So the inclusion $\fM\subset\fN$
is compatible with the inclusion $\cH\subset\cH_6$ under the motivic
iterated integral.
\end{rem}

\begin{rem}
One can also define iterated cotangent integrals of level $n\geq1$ by
replacing the upper bound $\pi/6$ by $\pi/n$.  For $n\geq2$, these are
again defined on $\QQ+x\fH$.  For $n=1$, the words additionally need to
end in $x$.  We leave these investigations to motivated students.
\end{rem}

\section{Some exact formulas}\label{sec:formulas}

In this section, we give some examples in both directions:
shifted log-sine integrals in terms of multiple zeta values, and
multiple zeta values in terms of shifted log-sine integrals.  In depth
one, we have the following classical formula, which appears, for example,
in \cite{Lewin,Choi-Cho-Srivastava} and was used by Umezawa
\cite{Umezawa-evaluation}.

\begin{prop}\label{prop:depth-one}
For odd $k\geq3$, we have
\begin{equation}\label{eq:depth-one}
 \SLs(k)
 =(-1)^{\frac{k-3}{2}}(k-2)!
 \Biggl(
 \Bigl(1-\frac12\bigl(1-2^{1-k}\bigr)\bigl(1-3^{1-k}\bigr)\Bigr)
 \zeta(k)
 +\sum_{j=1}^{\frac{k-3}{2}}
 \frac{(-1)^j\pi^{2j}}{3^{2j}(2j)!}\,
 \zeta(k-2j)
 \Biggr).
\end{equation}
As a consequence, we have
$\zeta(k)\in\operatorname{span}_{\QQ}
\{\pi^{2j}\SLs(k-2j)\mid0\leq j\leq\frac{k-3}{2}\}$.
\end{prop}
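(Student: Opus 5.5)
We would prove \eqref{eq:depth-one} directly from the definition, expanding $A(\theta)$ in its Fourier series and integrating term by term. Write $p=k-2$, which is odd, and $a=\pi/3$. Then
\begin{align*}
 \SLs(k)=\int_0^{a}(\theta-a)^{p}A(\theta)\,d\theta,
 \qquad
 A(\theta)=\log\bigl|2\sin(\theta/2)\bigr|=-\sum_{n\geq1}\frac{\cos(n\theta)}{n}.
\end{align*}

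The first step is to justify interchanging the sum and the integral. The series converges in $L^2(0,\pi)$, and $(\theta-a)^p$ is bounded on $[0,a]$. Alternatively, the partial sums of $\sum\cos(n\theta)/n$ are bounded by $C(1+|\log\theta|)$, which is integrable, so dominated convergence applies. This gives
\begin{align*}
 \SLs(k)=-\sum_{n\geq1}\frac1n\int_0^a(\theta-a)^p\cos(n\theta)\,d\theta .
\end{align*}

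Next, evaluate the inner integral by repeated integration by parts, differentiating the polynomial $(\theta-a)^p$ and integrating $\cos(n\theta)$ each time. The boundary contributions split by endpoint.

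At $\theta=a$, every derivative of $(\theta-a)^p$ of order less than $p$ vanishes. Only the last term survives: $p!$ times the $(p+1)$-fold antiderivative of $\cos(n\theta)$. Since $p+1$ is even, this is $\pm\cos(na)/n^{p+1}$, and after the outer sum it produces $\pm p!\sum_n\cos(n\pi/3)/n^{k}$.

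At $\theta=0$, the terms carry $\frac{p!}{(p-j)!}(-a)^{p-j}$ times the $(j+1)$-fold antiderivative of $\cos(n\theta)$ evaluated at $0$. This value vanishes when that antiderivative is a sine, so only $j$ with $p-j=2i$ even survive. These give $\frac{p!}{(2i)!}\,a^{2i}/n^{k-2i-1}$ up to signs, and summing over $n$ with the prefactor $1/n$ yields $\zeta(k-2i)$. The term $i=0$ also contributes to the $\zeta(k)$ coefficient.

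The remaining step is the classical evaluation
\begin{align*}
 \sum_{n\geq1}\frac{\cos(n\pi/3)}{n^k}=\frac12\bigl(1-2^{1-k}\bigr)\bigl(1-3^{1-k}\bigr)\zeta(k).
\end{align*}
It follows by writing $\cos(n\pi/3)$ as a function of $n\bmod 6$, namely $1,\tfrac12,-\tfrac12,-1,-\tfrac12,\tfrac12$, and splitting the sum according to divisibility by $2$ and $3$.

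Combining these pieces gives \eqref{eq:depth-one}. The main obstacle is purely bookkeeping: tracking the alternating signs from the iterated antiderivatives $\sin\to-\cos\to-\sin\to\cos$ and the sign $(-a)^{2i}$. These must combine to the overall factor $(-1)^{(k-3)/2}$ and the signs $(-1)^j/3^{2j}$. We would fix the signs by checking the case $k=3$ against the known value $\SLs(3)=\tfrac23\zeta(3)$, which is consistent with $C(W_1)=\tfrac16\zeta(3)$ and $\SLs(3)=4C(W_1)$. We would then verify the general sign pattern by induction on the number of integrations.

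For the consequence, note that the coefficient of $\zeta(k)$ in \eqref{eq:depth-one} is nonzero, since $\tfrac12(1-2^{1-k})(1-3^{1-k})<1$. Solving for $\zeta(k)$ expresses it as a combination of $\SLs(k)$ and the terms $\pi^{2j}\zeta(k-2j)$ with $j\geq1$. Induction on $k$, with base case $k=3$, then places each $\zeta(k-2j)$ in the span of the $\pi^{2i}\SLs(k-2j-2i)$, which gives the stated span.
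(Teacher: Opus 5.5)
Your proof is correct, but it takes a different route from the paper. The paper does not derive \eqref{eq:depth-one}; it only observes that the formula is a rearrangement of Equation~(4.14) of Choi--Cho--Srivastava. You give a self-contained derivation from the Fourier series $A(\theta)=-\sum_{n\geq1}\cos(n\theta)/n$, repeated integration by parts, and the evaluation of $\sum_n\cos(n\pi/3)n^{-k}$ via residues modulo $6$.

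The sign bookkeeping you postpone does close up. Take $F_m(\theta)=n^{-m}\cos(n\theta-m\pi/2)$ as the $m$-fold antiderivative of $\cos(n\theta)$. Then
\begin{align*}
 \int_0^{a}(\theta-a)^{k-2}\cos(n\theta)\,d\theta
 =(-1)^{\frac{k-3}{2}}(k-2)!\Bigl(\frac{\cos(na)}{n^{k-1}}
 -\sum_{i=0}^{(k-3)/2}\frac{(-1)^i a^{2i}}{(2i)!\,n^{k-1-2i}}\Bigr),
\end{align*}
and multiplying by $-1/n$ and summing over $n$ gives exactly \eqref{eq:depth-one}. The cases $k=3,5$ reproduce $\tfrac23\zeta(3)$ and $-\tfrac{29}{9}\zeta(5)+2\zeta(2)\zeta(3)$.

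So you need neither the $k=3$ sanity check nor a separate induction to fix signs. You also should not lean on $\SLs(3)=\tfrac23\zeta(3)$ or $C(W_1)=\tfrac16\zeta(3)$, since the paper obtains both from this very proposition. A single value of $k$ could not fix the sign pattern for all $k$ in any case.

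Your route makes the structure transparent: the correction to the coefficient of $\zeta(k)$ is $\operatorname{Re}\operatorname{Li}_k(e^{\pi i/3})$, coming from the endpoint $\pi/3$, and the terms $\pi^{2j}\zeta(k-2j)$ come from the endpoint $0$. The paper's citation is shorter but leaves checking the rearrangement to the reader. Your deduction of the span statement (nonzero coefficient of $\zeta(k)$, then induction on $k$) is the same as the paper's.
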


\begin{proof}
Formula \eqref{eq:depth-one} is a rearrangement of
\cite[Equation~(4.14)]{Choi-Cho-Srivastava}.  The coefficient of
$\zeta(k)$ is nonzero.  The second statement follows by
induction on $k$.
\end{proof}

For example, we have $\SLs(3)=\frac23\zeta(3)$ and
$\SLs(5)=-\frac{29}{9}\zeta(5)+2\zeta(2)\zeta(3)$.

The shuffle product formula \eqref{eq:sls-shuffle} also gives exact
evaluations in every depth.  For example, for odd $k,l\geq3$ and
$r\geq0$, we have $r!\,\SLs(\{k\}^r)=\SLs(k)^r$, where $\{k\}^r$
denotes $r$ repetitions of $k$, and
$\SLs(k,l)+\SLs(l,k)=\SLs(k)\SLs(l)$.  Together with
\Cref{prop:depth-one}, this gives, e.g.,
\begin{align*}
 \SLs(3,3)
 =\frac29\zeta(3)^2,
 \qquad
 \SLs(3,5)+\SLs(5,3)
 =-\frac{58}{27}\zeta(3)\zeta(5)
 +\frac43\zeta(2)\zeta(3)^2.
\end{align*}
The stuffle product gives formulas in the other direction, e.g.,
$\zeta(3,3)=\frac94\SLs(3,3)-\frac12\zeta(6)$.

For the individual values $\SLs(k,l)$, no explicit formula is known.
The conjectural evaluations in this work were found by computing the
shifted log-sine integrals numerically to more than $200$ decimal digits
and by using the lindep command in Pari/GP.  For example, we find
\begin{align}\label{eq:conj-S35}
 \SLs(3,5)
 \overset{?}{=}
 \frac{159}{80}\zeta(8)
 -\frac{29}{9}\zeta(3)\zeta(5)
 +\zeta(2)\zeta(3)^2
 -\frac{569}{135}\zeta(3,5).
\end{align}
Together with the evaluation of $\SLs(3,5)+\SLs(5,3)$ above, this also
determines $\SLs(5,3)$.  In the other direction, these formulas give
\begin{align*}
 \zeta(3,5)
 &\overset{?}{=}
 \frac{135}{569}\SLs(5,3)
 +\frac{135}{1138}\SLs(3)\SLs(5)
 -\frac{1215}{2276}\zeta(2)\SLs(3)^2
 +\frac{4293}{9104}\zeta(8).
\end{align*}
In depth three, the shuffle product formula and the formulas above do
not determine the values $\SLs(3,3,5)$, $\SLs(3,5,3)$ and
$\SLs(5,3,3)$.  But together with the evaluations in depth two, each
of these values determines the other two.  Numerically, we find
\begin{align*}
 \SLs(3,3,5)
 \overset{?}{=}{}&
 -\frac{4266821}{8640}\zeta(11)
 +\frac{115}{18}\zeta(2)\zeta(9)
 +\frac{1455}{4}\zeta(4)\zeta(7)
 +\frac{4355}{54}\zeta(6)\zeta(5)
 +\frac{159}{80}\zeta(8)\zeta(3)\\
 &-\frac{29}{18}\zeta(3)^2\zeta(5)
 +\frac13\zeta(2)\zeta(3)^3
 +\frac{2347}{162}\zeta(3)\zeta(3,5)
 -\frac{15149}{810}\zeta(3,5,3).
\end{align*}
It would be interesting to find explicit
evaluations of $\SLs(k_1,\dots,k_r)$ in general.\\

\begingroup
\footnotesize

{\bf Use of AI tools.}  In the preparation of this article, the author
used ChatGPT 5.6 Pro and Claude Fable for numerical and exploratory
calculations and to improve the language of the manuscript.  All
results and their proofs were worked out and verified by the author,
who takes full responsibility for the content of this article.

\endgroup

\end{document}